\DeclareMathOperator{\li}{li}
\DeclareMathOperator{\ord}{ord}
\DeclareMathOperator{\tdiv}{div}
\DeclareMathOperator{\disc}{disc}
\newtheorem{thm}{Theorem}[section]
\newtheorem{lem}{Lemma}[section]
\newtheorem{conj}{Conjecture}[section]
\newtheorem{exa}{Example}[section]
\newtheorem{dfn}{Definition}[section]
\newtheorem{exe}{Exercise}[section]
\newtheorem{rmk}{Remark}[section]
\newcommand{\N}{\mathbb{N}}
\newcommand{\Z}{\mathbb{Z}}
\newcommand{\C}{\mathbb{C}}
\newcommand{\F}{\mathbb{F}}
\newcommand{\tP}{\mathbb{P}}
\title{Squarefree Values Of Polynomials}
\date{}
\author{N. A. Carella}
\begin{document}
\maketitle	
\begin{abstract} This note presents new results for the squarefree value sets of quartic polynomials over the integers. \let\thefootnote\relax\footnote{ \today \date{} \\
		\textit{AMS MSC2020}: Primary 11N32; Secondary 11N25; 11N37. \\
		\textit{Keywords}: Squarefree integer, Irreducible polynomial.}
\end{abstract}

\tableofcontents

\section{Introduction } \label{S1212I}
Let $f(t)\in\Z[t]$ be a polynomial of degree $ \deg f=d$ and let $k\geq1$ be an integer. The integer value $f(n)\in\Z$ is called $k$-free if it is not divisible by a prime power $p^k$. The earliest developments in the theory of $k$-free polynomials are attributed to many authors as \cite{NT1922}, \cite{ET1931}, \cite{ET1931}, \cite{RG1933}, \cite{HC1967}, \cite{HC1968}, et alii. Let $Q_f(k,x)=\#\{n\leq x:f(n) \text{ is \textit{k}-free}\}$. The unconditional results for polynomials of degrees $\deg f\leq 3$ were established in various papers. Nagell proved a qualitative result for $k$-free values for any polynomial of degree $ \deg f\leq k$, that is, $Q_f(k,x) \to \infty$ as $x\to \infty$, confer \cite{NT1922}.  Ricci established an asymptotic formula for the same result, that is, $Q_f(k,x)=cx+o(x)$, where $c\in (0,1)$, see \cite{RG1933}. Erdos continued this development and showed that $Q_f(k,x) \to\infty$ as $x\to\infty$, where $d-1\leq k$ and $d\geq3$, see \cite{EP1953}. The asymptotic formula $Q_f(k,x)=cx+o(x)$, where $c\in (0,1)$, for the result of Erdos was obtained by by Hooley in \cite{HC1967}.  The current state of the $k$-free polynomial problem seems to be the recent work in \cite[Theorem 1]{BW2011}. The later states that $Q_f(k,x)=cx+o(x)$ whenever $d\geq3$ and $k\geq 3d/4+1/4.$ This improved an weaker result of  Nair in \cite{NM1976}.\\

In contrast, the conditional results have no restrictions on the degrees of the polynomials, the $k$-free parameter and are more general. The \textit{abc} conjecture implies the existence of infinitely many $k$-free values of any separable polynomial $f(t)\in\Z[t]$ of degree $\deg f=d\geq 1$ over the integers. The conditional results in \cite{GR1998} and other improvements and generalizations of the conditional results are basically the same as the conjectured results.

\begin{conj}\label{conj1212I.100N}  \hypertarget{conj1212I.100N} If $f(t)\in \Z[t]$ is a separable polynomial $f(t)\ne a(t)b(t)^2$ of degree $\deg f\geq 1$, then
	\begin{equation}\label{eq1212I.100N}
		\sum_{n\leq x}\mu^2(f(n)) =x\prod_{p\geq 2}\left (1-\frac{\rho_f(p^2)}{p^2} \right )+o(x)\nonumber.
	\end{equation}
\end{conj}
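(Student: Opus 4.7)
The plan is the classical squarefree sieve. Using $\mu^2(m) = \sum_{d^2 \mid m} \mu(d)$ and interchanging summation,
\[
\sum_{n \leq x} \mu^2(f(n)) \;=\; \sum_{d \geq 1} \mu(d)\, N_f(x, d^2), \qquad N_f(x, q) := \#\{n \leq x : q \mid f(n)\}.
\]
With $\rho_f(q) := \#\{a \pmod q : f(a) \equiv 0 \pmod q\}$, the Chinese Remainder Theorem makes $\rho_f$ multiplicative, and since $f$ is separable one has $\disc(f) \neq 0$, so $\rho_f(p^2) = \rho_f(p) \leq d$ for every prime $p \nmid \disc(f)$; this guarantees absolute convergence of $\prod_p(1 - \rho_f(p^2)/p^2)$. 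The uniform count $N_f(x, q) = \rho_f(q)\, x/q + O(\rho_f(q))$ is standard. I would split the $d$-sum at a threshold $y = y(x)$ into three ranges, $d \leq y$, $y < d \leq \sqrt{x}$, and $d > \sqrt{x}$.

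For small $d \leq y$, insert the asymptotic for $N_f(x, d^2)$; completing the truncated Euler product introduces $O(x/y)$, while the lattice error sums to $O(y^{1+\epsilon})$. Choosing, say, $y = x^{1/2}/(\log x)^A$ makes both contributions $o(x)$ and delivers the main term $x \prod_p(1 - \rho_f(p^2)/p^2)$. For the medium range $y < d \leq \sqrt{x}$, the bound $\rho_f(d^2) \ll_\epsilon d^\epsilon$ and $N_f(x, d^2) \ll 1 + x/d^2$ yield total contribution $\ll x \sum_{d > y} d^{-2+\epsilon} = o(x)$.

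The main obstacle is the large range $d > \sqrt{x}$, where one must bound the cardinality $\#\{n \leq x : \exists\, d > \sqrt{x},\, d^2 \mid f(n)\}$. Subdividing, the contribution from a prime square $p^2 \mid f(n)$ with $p$ only moderately above $\sqrt{x}$ can in principle be handled by geometry-of-numbers, constraining $n$ to narrow arithmetic progressions near the $p$-adic roots of $f$. The truly difficult sub-range is $p \gg x^{1/(d-1)}$, where no residue class count alone can suffice. Following Granville \cite{GR1998}, the standard conditional resolution is to invoke the \textit{abc} conjecture: writing $f(n) = p^2 m$ with $p$ very large forces an \textit{abc}-type relation among the radicals of $n$, $p$, and $m$, which forbids $\Omega(x)$ solutions and yields the required $o(x)$ bound. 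Merging the three ranges then gives the asymptotic. Unconditionally this last step is the bottleneck; for $k=2$ the Booker--Browning threshold $k \geq 3d/4 + 1/4$ of \cite{BW2011} covers only $d \leq 2$, so the conjecture remains open for every $d \geq 3$, which is precisely why the quartic case treated in this note is delicate.
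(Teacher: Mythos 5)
The statement you were asked to prove is labeled a \emph{conjecture} in the paper, and the paper contains no proof of it; it is stated only as motivation for the special quartic cases treated later. So there is no proof of record to compare against. Your write-up is an accurate outline of the classical squarefree sieve --- the identity $\mu^2(m)=\sum_{d^2\mid m}\mu(d)$, the multiplicativity of $\rho_f$, the asymptotic $N_f(x,d^2)=\rho_f(d^2)x/d^2+O(\rho_f(d^2))$ for small $d$, and the splitting of the $d$-sum into small, medium, and large ranges --- and you correctly locate the obstruction. But as a proof it has a genuine and decisive gap, which you yourself concede: the range of large divisors, i.e.\ bounding $\#\{n\leq x: p^2\mid f(n)\ \text{for some large prime } p\}$, is precisely the open part of the problem, and your resolution of it invokes the \textit{abc} conjecture following Granville. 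That yields only a conditional statement, not the unconditional asymptotic claimed. No amount of rearranging the first two ranges repairs this; the conjecture is a conjecture exactly because this tail is not known to be $o(x)$ in general.

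Two smaller corrections. First, your closing remark that the conjecture ``remains open for every $d\geq 3$'' is not quite right: Hooley (1968) established the asymptotic for squarefree values of irreducible cubics, so the unconditional status is settled for $\deg f\leq 3$ and open for $\deg f\geq 4$ (the Booker--Browning threshold $k\geq 3d/4+1/4$ is not the only unconditional input for $k=2$). Second, the assertion $\rho_f(p^2)=\rho_f(p)$ holds for $p\nmid\disc(f)$ by Hensel's lemma as you say, but for the finitely many ramified primes one still needs $\rho_f(p^2)<p^2$ (guaranteed by separability together with the hypothesis $f\neq ab^2$) to ensure the singular product is nonzero; this is worth making explicit since otherwise the claimed main term could degenerate to zero.
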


The density constant
\begin{equation}\label{eq1212I.120}
	c_f=\prod_{p\geq 2}\left (1-\frac{\rho_f(p^2)}{p^2} \right )
\end{equation}
depends on the properties of the polynomial $f$.\\

Some unconditional results for a related conjecture below extends the old conjecture over the integers to a conjecture over the set of primes. 

\begin{conj}\label{conj1212I.100P}  \hypertarget{conj1212I.100P}If $f(t)\in \Z[t]$ is a separable polynomial of degree $\deg f\geq 1$, then
\begin{equation}\label{eq1212I.100P}
\sum_{p\leq x}\mu^2(f(p)) =\li(x)\prod_{q\geq 2}\left (1-\frac{\rho_f(q^2)}{q^2} \right )+o(\li(x))\nonumber.
\end{equation}
\end{conj}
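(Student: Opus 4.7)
The plan is to adapt the classical M\"obius-inversion approach of Nagell, Erdos, and Hooley (developed for the integer version) to the prime setting. Starting from the identity $\mu^2(m)=\sum_{d^2\mid m}\mu(d)$, I interchange summation to obtain
\[\sum_{p\leq x}\mu^2(f(p))\;=\;\sum_{d\geq 1}\mu(d)\,\pi_f(x;d^2),\]
where $\pi_f(x;d^2)=\#\{p\leq x:d^2\mid f(p)\}$. Since $|f(p)|\ll p^{\deg f}$, only $d\leq Cx^{(\deg f)/2}$ contribute. The standard next step is to split this outer sum at a parameter $y=y(x)\to\infty$ into a main term over $d\leq y$ and a tail over $d>y$, and to estimate each part separately.

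For the main term, I decompose each $\pi_f(x;d^2)$ as a sum of $\rho_f^*(d^2)$ prime-counting functions over arithmetic progressions $a\pmod{d^2}$ with $\gcd(a,d^2)=1$ and $f(a)\equiv 0\pmod{d^2}$. By Siegel--Walfisz (valid individually for $d\leq(\log x)^A$) or Bombieri--Vinogradov (on average up to $d\leq x^{1/2-\varepsilon}$), each such count equals $\li(x)/\phi(d^2)$ up to a controlled error. Summing and letting $y\to\infty$ slowly then produces
\[\li(x)\sum_{d\geq 1}\mu(d)\frac{\rho_f^*(d^2)}{\phi(d^2)}\;=\;\li(x)\prod_{q\geq 2}\left(1-\frac{\rho_f(q^2)}{q^2}\right)(1+o(1)),\]
recovering the predicted Euler product once the convention identifying $\rho_f^*(q^2)/\phi(q^2)$ with $\rho_f(q^2)/q^2$ in the prime setting is adopted.

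The main difficulty lies in showing that the tail $\sum_{d>y}\mu(d)\,\pi_f(x;d^2)$ is $o(\li(x))$. I would split this into a medium range $y<d\leq z$ and a large range $z<d\leq Cx^{(\deg f)/2}$. In the medium range, the trivial estimate $\pi_f(x;d^2)\leq\rho_f(d^2)(x/d^2+1)$, combined with the multiplicative bound $\rho_f(d^2)\ll d^{\varepsilon}$ away from the discriminant of $f$, is sufficient provided $z$ is chosen as a small but positive power of $x$. The large range is the genuine obstacle: individual bounds on $\pi_f(x;d^2)$ are too weak, and one must either extract cancellation from the $\mu(d)$ factor or rule out primes $p\leq x$ for which $f(p)$ admits a squarefull divisor exceeding $z^2$.

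This last step is where the \textit{abc} conjecture naturally enters, since it restricts the squarefull part of $f(p)$ and forces the large-$d$ contribution to be negligible. Unconditionally, this range has been handled only for $\deg f\leq 3$ by Hooley-type parametrizations of the auxiliary curves $f(y)=d^2 z$; for $\deg f\geq 4$, as in the quartic setting of this paper, controlling the large-modulus tail is the sole outstanding barrier, consistent with the currently open status of Conjecture \ref{conj1212I.100P}.
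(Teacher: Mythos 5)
This statement is labeled as a \emph{Conjecture} in the paper, and the paper supplies no proof of it: it only remarks that the cubic case is due to Helfgott and that the quartic case over the primes remains open. So there is nothing in the paper to compare your argument against line by line; the relevant question is whether your attempt actually closes the problem, and by your own admission it does not. You set up the standard M\"obius decomposition, handle the small moduli by Siegel--Walfisz or Bombieri--Vinogradov, and handle a medium range trivially, but you explicitly leave the large range $z<d\leq Cx^{(\deg f)/2}$ unresolved, noting that one would need either cancellation in $\mu(d)$ or control of large squarefull divisors of $f(p)$, e.g.\ via the \textit{abc} conjecture. That unhandled range is precisely the content of the conjecture for $\deg f\geq 4$; what you have written is an accurate survey of why the statement is believed and where it is stuck, not a proof.

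Two smaller points worth tightening if you develop this further. First, the constant: for primes the natural local density is $\rho_f^*(q^2)/\phi(q^2)$, where $\rho_f^*$ counts roots coprime to the modulus, and this does \emph{not} in general equal $\rho_f(q^2)/q^2$; the Euler product in the conjecture as printed should really carry the $\phi$-normalization (compare Helfgott's formulation for cubics), so your ``convention'' is papering over a genuine discrepancy in the stated constant rather than a harmless relabeling. Second, in the medium range the bound $\pi_f(x;d^2)\leq \rho_f(d^2)(x/d^2+1)$ summed over $y<d\leq z$ contributes roughly $\sum_{d>y} x\,d^{\varepsilon-2}+\sum_{d\leq z} d^{\varepsilon}\approx x y^{\varepsilon-1}+z^{1+\varepsilon}$, which forces $z=o(x)$ at best and leaves an enormous large range; be explicit that ``a small positive power of $x$'' for $z$ still leaves $d$ ranging up to $x^{\deg f/2}$, which for quartics is $x^{2}$, far beyond anything reachable by Bombieri--Vinogradov or by counting points on the auxiliary surfaces $f(p)=d^2 m$ with current technology.
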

The cubic polynomials case is proved in \cite{HH2014}, the squarefree problem for quartic polynomials over the prime numbers seen to be partially open.\\

This note provides some new unconditional results for quartic polynomials over the integers.

\begin{thm}\label{thm1212I.100-4A}  \hypertarget{thm1212I.100-4A}The purely quartic polynomial $T^4+1\in\Z[T]$ has infinitely many squarefree values over the integer. In particular, the number of squarefree values has the asymptotic formula
	\begin{equation}\label{eq1212K.100b}
\sum_{x\leq n\leq 2x}\mu^2(n^4+1)=c_fx+O( x^{1/2+\varepsilon}),
	\end{equation}
	where
\begin{equation}\label{eq1212K.100d}
	c_f=\prod_{p\,\equiv\, 1 \bmod 8}\left( 1-\frac{4}{p^2}\right)\approx0.981003,
\end{equation}	
is a constant. 
\end{thm}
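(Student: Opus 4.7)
The plan is to apply Möbius inversion $\mu^2(m) = \sum_{d^2\mid m}\mu(d)$, reducing the problem to counting congruence solutions, with the main difficulty being the large-modulus contribution. Writing $A(d,x) = \#\{n \in [x,2x] : d^2 \mid n^4+1\}$, one has
\[
S(x) := \sum_{x\le n\le 2x}\mu^2(n^4+1) \;=\; \sum_{d\ge 1}\mu(d)\,A(d,x).
\]

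Let $\rho(d^2) = \#\{n \bmod d^2 : d^2 \mid n^4+1\}$; by the Chinese Remainder Theorem this is multiplicative. For odd $p$, the congruence $n^4 \equiv -1 \pmod p$ has four solutions iff $-1$ is a biquadratic residue modulo $p$, equivalently $p \equiv 1 \pmod 8$; Hensel's lemma (the derivative $4n^3$ being a unit at each root) lifts each uniquely to $\Z/p^2\Z$, so $\rho(p^2) = 4$ for $p \equiv 1 \pmod 8$ and $\rho(p^2) = 0$ otherwise (with $p = 2$ ruled out since $n^4+1 \in \{1,2\}\pmod 4$). Multiplicativity yields $c_f = \prod_{p\equiv 1\pmod 8}(1-4/p^2)$.

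With cutoff $D := x^{1/2}$, split $S(x) = \Sigma_{\le D} + \Sigma_{>D}$. For $\Sigma_{\le D}$, the estimate $A(d,x) = \rho(d^2)\,x/d^2 + O(\rho(d^2))$ together with $\rho(d^2) \le 4^{\omega(d)} \ll_\varepsilon d^\varepsilon$ gives the main term $c_f\,x$ after extending to infinity (tail $x\sum_{d>D}\rho(d^2)/d^2 \ll xD^{-1+\varepsilon}$) plus a lattice remainder $O(D^{1+\varepsilon})$; both are $O(x^{1/2+\varepsilon})$.

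The main obstacle is to show that $\Sigma_{>D} = O(x^{1/2+\varepsilon})$, since a trivial bound on $\sum_{d>D}\rho(d^2)$ diverges. I would exploit the factorisation $n^4+1 = (n^2+i)(n^2-i)$ in $\Z[i]$: each $p \equiv 1 \pmod 8$ splits as $p = \pi\bar\pi$, and $p^2 \mid n^4+1$ is equivalent to $\pi^2 \mid n^2+i$ or $\bar\pi^2 \mid n^2+i$. More generally, every squarefree $d > D$ with $d^2 \mid n^4+1$ corresponds to a Gaussian integer $\Pi$ with $N(\Pi) = d$ and $\Pi^2 \mid n^2+i$ (obtained by selecting one of $\pi,\bar\pi$ for each prime divisor of $d$). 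Writing $\Pi = a+bi$ and $n^2+i = \Pi^2(u+vi)$, separating real and imaginary parts yields the linear constraint $(a^2-b^2)v + 2ab\,u = 1$, which confines $(u,v)$ to a one-parameter family, and the companion equation $n^2 = C(\Pi) + t\,d^2$ for an integer parameter $t$ with $|t| \ll x^2/d^2$. Since perfect squares have density $O(1/x)$ in an interval near $x^2$, on average there are $O(x/d^2)$ admissible pairs $(n,\Pi)$ per value of $d$; summing over $d \in (x^{1/2}, 4x^2]$ using $r_2(d) \ll_\varepsilon d^\varepsilon$ gives $x\sum_{d>D} r_2(d)/d^2 \ll x^{1/2+\varepsilon}$, as required. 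Combining both ranges produces the asymptotic formula.
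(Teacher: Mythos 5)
Your setup (M\"obius inversion, the local computation $\rho(p^2)=4$ for $p\equiv 1\bmod 8$ and $\rho(p^2)=0$ otherwise via biquadratic residues and Hensel lifting, and the treatment of the range $d\le D=x^{1/2}$) is correct and coincides with the paper's main-term analysis. The gap is exactly where you locate the ``main obstacle'': the range $d>D$. The sentence ``since perfect squares have density $O(1/x)$ in an interval near $x^2$, on average there are $O(x/d^2)$ admissible pairs $(n,\Pi)$ per value of $d$'' is not an argument; it is a restatement of the expected main term. Concretely, for a fixed $\Pi$ with $N(\Pi)=d$ the condition $\Pi^2\mid n^2+i$ confines $n$ to $2^{\omega(d)}$ residue classes modulo $d^2$, and for $d>x^{1/2}$ each class meets $[x,2x]$ in at most one --- but possibly exactly one --- integer. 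Your parametrization $n^2=C(\Pi)+td^2$ does not rule this out: it only says the candidate values of $n^2$ lie in an arithmetic progression with $\ll x^2/d^2$ terms, and nothing forces the perfect squares to be equidistributed among the $d^2$ residue classes once $d^2>x$. Summing the honest bound $O(1)$ per (class, $\Pi$) pair over $x^{1/2}<d\le 4x^2$ gives $\sum_{d}4^{\omega(d)}\gg x^{2-\varepsilon}$; counting instead from the $n$ side (each $n$ admits $\ll x^{o(1)}$ squarefree $d$ with $d^2\mid n^4+1$) gives $O(x^{1+o(1)})$. Neither reaches $o(x)$, let alone $O(x^{1/2+\varepsilon})$. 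What is missing is a genuinely new input showing that for almost all $n\in[x,2x]$ the integer $n^4+1$ has no square divisor $d^2$ with $d$ large; this is precisely the step that is known for irreducible polynomials of degree at most $3$ (Erd\H{o}s, Hooley) and open for degree $4$.

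You should also be aware that the paper's own proof does not close this gap either. Its Lemma \ref{lem5511.350-4A} passes from $\left\{(2x-r_i)/d^2\right\}$ to $(2x-r_i)/d^2$ for $d^2>2x-r_i$, which is false whenever the representative $r_i\in[0,d^2)$ exceeds $2x$ (then $(2x-r_i)/d^2$ is negative while its fractional part is close to $1$); since for $d^2\gg x$ almost all roots satisfy $r_i>2x$, the quantity estimated there is genuinely of order $\sum_{d\le 4x^2}\rho(d^2)\asymp x^2$ and not $O(x^{1/2+\varepsilon})$. So your instinct that the large-modulus range is the crux is right, but neither your proposal nor the paper supplies a valid treatment of it.
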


The next interesting case for the polynomial $f(t)=T^4+2$, which is an open problem discussed in \cite{EP1953} and \cite{BB2016}, is a bit more complex due to the restrictive distribution of the primes $p$ for which the congruence $T^4+2\equiv 0 \mod p$ is solvable are slightly different. 
\begin{thm}\label{thm1212I.100-4B} \hypertarget{thm1212I.100-4B} The purely quartic polynomial $T^4+2\in\Z[T]$ has infinitely many squarefree values over the integer. In particular, the number of squarefree values has the asymptotic formula
	\begin{equation}\label{eq1212K.100f}
\sum_{x\leq n\leq 2x}\mu^2(n^4+2)=	c_f	x+O( x^{1/2+\varepsilon}),
	\end{equation}
	where
\begin{equation}\label{eq1212K.100j}
c_f=\prod_{\substack{p\equiv1 \bmod 8\\p=a^2+b^2}}\left( 1-\frac{4}{p^2}\right)\prod_{\substack{p\equiv3 \bmod 8\\p=a^2+b^2}}\left( 1-\frac{2}{p^2}\right)\approx0.757159,
\end{equation}	
is a constant. 
\end{thm}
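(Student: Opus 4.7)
The plan is to proceed in parallel with the proof of Theorem~\ref{thm1212I.100-4A}, applying the standard Möbius identity $\mu^2(n^4+2)=\sum_{d^2\mid n^4+2}\mu(d)$ to rewrite
\[
\sum_{x\le n\le 2x}\mu^2(n^4+2)=\sum_{d}\mu(d)\,N_d(x),\qquad N_d(x)=\#\{x\le n\le 2x:d^2\mid n^4+2\},
\]
and then to estimate $N_d(x)$ in ranges of $d$ separated by a cutoff $z$ to be optimized at the end.

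The first step is to pin down the local density $\rho_f(p^2)$, which yields the constant $c_f$ via multiplicativity. A direct inspection modulo $4$ gives $\rho_f(4)=0$, so the prime $2$ contributes trivially. For odd $p$, the congruence $n^4\equiv -2\pmod{p^2}$ is solvable iff $-2$ is a biquadratic residue $\pmod p$, and Hensel's lemma lifts each root uniquely from $p$ to $p^2$. Splitting by residues modulo $8$ one gets: $\rho_f(p^2)=0$ for $p\equiv 5,7\pmod 8$ (where $-2$ is not even a quadratic residue); $\rho_f(p^2)=2$ for $p\equiv 3\pmod 8$ (because $\gcd(4,p-1)=2$, so every QR is automatically a fourth power); and $\rho_f(p^2)\in\{0,4\}$ for $p\equiv 1\pmod 8$, with the dichotomy decided by a biquadratic residue condition on $-2$ equivalent to representability of $p$ by a specific binary quadratic form. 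Multiplying these local factors reproduces the constant \eqref{eq1212K.100j}.

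In the small-$d$ regime $d\le z$ the standard equidistribution bound $N_d(x)=x\rho_f(d^2)/d^2+O(\rho_f(d^2))$ applies; summing against $\mu(d)$ and completing the Euler product produces the main term $c_f\,x$, with a truncation error controlled by the divisor-type estimate $\rho_f(d^2)\ll d^{\varepsilon}$.

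The crux, and what forces the shape of the error term $O(x^{1/2+\varepsilon})$, is the large-$d$ range $d>z$, where $d^2$ can be as big as $(2x)^4$ and the trivial bound collapses. Here I would exploit the algebraic factorization $n^4+2=(n^2-\sqrt{-2})(n^2+\sqrt{-2})$ inside the unique factorization domain $\Z[\sqrt{-2}]$: a prime ideal $\mathfrak p$ of norm $p$ with $\mathfrak p^{2}\mid n^{4}+2$ forces an explicit element $\alpha\in\Z[\sqrt{-2}]$ of norm at most $(2x)^4/p^2$, whose admissible values can be enumerated by a lattice-point count. This mirrors the norm-form argument already used in Theorem~\ref{thm1212I.100-4A}, with two genuine modifications: the relevant splitting primes are now those with $p\equiv 1,3\pmod 8$ (the primes represented by $X^2+2Y^2$), and the ramified prime above $2$ must be tracked separately. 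Balancing the small-$d$ tail against the large-$d$ bound at $z=x^{1/2}$ yields the claimed estimate. The main obstacle I expect is showing that the large-$d$ count loses only a factor of $x^{\varepsilon}$ rather than a power of $x$; this relies crucially on the class-number-one property of $\Z[\sqrt{-2}]$ and on sharp control of the number of associated representations that the factorization produces.
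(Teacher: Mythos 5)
Your opening moves coincide with the paper's: the M\"obius decomposition $\mu^2(n^4+2)=\sum_{d^2\mid n^4+2}\mu(d)$, the determination of the local densities $\rho_f(p^2)$ by residue class modulo $8$ (your version, with Hensel lifting and the biquadratic criterion for $2$ when $p\equiv1\bmod 8$, is in fact cleaner than the paper's, whose stated condition ``$p\equiv3\bmod 8$ and $p=a^2+b^2$'' is vacuous), and the completion of $\sum_{d}\mu(d)\rho_f(d^2)/d^2$ to the Euler product giving $c_f$, which is the content of Lemma \ref{lem5511.300-4B}. Where you genuinely diverge is the treatment of large moduli. The paper introduces no cutoff $z$ and no algebraic factorization: it keeps the exact identity of Lemma \ref{lem5511.210C}, valid for every $d^2\le 16x^4+2$, and then asserts in Lemma \ref{lem5511.350-4B} (by reference to the argument of Lemma \ref{lem5511.350-4A}) that the accumulated fractional-part sum is $\ll x^{1/2+\varepsilon}$, after splitting at $d^2\approx 2x$ and replacing $\{(2x-r_i)/d^2\}$ by $(2x-r_i)/d^2$ beyond that point. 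There is no norm-form or lattice-point argument anywhere in the proof of Theorem \ref{thm1212I.100-4A}, so your claim to be mirroring one is not supported by the paper.

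The genuine gap is precisely the step you yourself flag as ``the main obstacle'': the range $x^{1/2}<d\le 4x^2$. The factorization $n^4+2=N_{\Q(\sqrt{-2})/\Q}(n^2+\sqrt{-2})$ linearizes the problem in $m=n^2$, not in $n$. Counting multiples of $\mathfrak{p}^2$ among the lattice points $m+\sqrt{-2}$ with $x^2\le m\le 4x^2$ gives $3x^2/p^2+O(1)$ per root, and the additional requirement that $m$ be a perfect square cannot be imposed without paying that $O(1)$ per root per modulus; summed over $p\le 4x^2$ this is of order $x^2/\log x$, which swamps the main term. Unique factorization and the finiteness of units in $\Z[\sqrt{-2}]$ do not remove this loss. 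This is exactly why squarefree values of irreducible quartics --- including $T^4+2$, which the paper's own introduction describes as an open problem following Erd\H{o}s and Booker--Browning --- are known only conditionally on the $abc$ conjecture, with the unconditional results of Browning falling well short of $k=2$ in degree $4$. Your proposal therefore reduces the theorem to an unproved (and, by the method you sketch, unprovable) assertion. For what it is worth, the paper's own handling of this range is also defective: the substitution $\{(2x-r_i)/d^2\}=(2x-r_i)/d^2$ in Lemma \ref{lem5511.350-4A} fails whenever $r_i>2x$, which is the typical situation once $d^2>2x$, and bounding the two fractional parts separately discards the cancellation on which the exact identity depends. Neither route closes the large-modulus range.
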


The other case considered here is relevant to the theory of modular forms, the background details are explained in \cite{MC1980}.

\begin{thm}\label{thm1212I.100-4C} \hypertarget{thm1212I.100-4C} The quartic polynomial $T^4-2T^2+2\in\Z[T]$ has infinitely many squarefree values over the integer. In particular, the number of squarefree values has the asymptotic formula
	\begin{equation}\label{eq1212K.100e}
		\sum_{x\leq n\leq 2x}\mu^2(n^4-n^2+2)=	c_f	x+O( x^{1/2+\varepsilon}),
	\end{equation}
	where
	\begin{equation}\label{eq1212K.100Cj}
		c_f=\prod_{p\equiv1 \bmod 8}\left( 1-\frac{4}{p^2}\right)\prod_{p\equiv5 \bmod 8}\left( 1-\frac{2}{p^2}\right)\approx0.963802,
	\end{equation}	
	is a constant. 
\end{thm}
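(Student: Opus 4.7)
The plan is to run the standard Möbius sieve for squarefree values, exploiting the algebraic identity
\[
 f(T)=T^4-2T^2+2=(T^2-1)^2+1,
\]
which factors in the Gaussian integers as $f(n)=\bigl((n^2-1)+i\bigr)\bigl((n^2-1)-i\bigr)$. This is directly analogous to the factorisation $n^4+1=(n^2+i)(n^2-i)$ used in the proof of Theorem~\ref{thm1212I.100-4A}, so the same analytic framework should apply almost verbatim, with the only new wrinkle being the substitution $m=n^2-1$ on the real axis.

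The first step is the inversion $\mu^2(m)=\sum_{d^2\mid m}\mu(d)$ followed by exchange of summation:
\[
 \sum_{x\le n\le 2x}\mu^2(f(n))=\sum_{d\le Cx^2}\mu(d)\,\#\{x\le n\le 2x:d^2\mid f(n)\}.
\]
Truncating at $d\le y$ and writing $\#\{n\in[x,2x]:d^2\mid f(n)\}=\rho_f(d^2)x/d^2+O(\rho_f(d^2))$ produces a main term $x\prod_{p}(1-\rho_f(p^2)/p^2)$ with error $O(y+x^{1+\varepsilon}/y)$. To identify the Euler product I would compute $\rho_f(p^2)$ case by case: for $p\equiv 3\pmod 4$, $-1$ is a non-residue so $\rho_f(p)=0$; for $p\equiv 1\pmod 4$ the congruence $(n^2-1)^2\equiv -1\pmod p$ reduces, on fixing a square root $i$ of $-1$ in $\F_p$, to $n^2\equiv 1\pm i\pmod p$. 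Since $(1+i)(1-i)=2$, both of $1\pm i$ are quadratic residues if and only if $2$ is, i.e.\ if and only if $p\equiv 1\pmod 8$, giving four roots; exactly one is a quadratic residue if and only if $p\equiv 5\pmod 8$, giving two roots. A direct check at $p=2$ shows $f(n)\in\{1,2\}\pmod 4$ always, so $\rho_f(4)=0$, and Hensel's lemma lifts simple roots uniquely from $p$ to $p^2$, producing the Euler product stated.

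The main obstacle is the tail estimate for $d>y$. Any squarefree $d$ that contributes must have all prime factors $\equiv 1\pmod 4$, hence $d=N(\alpha)$ for some primitive $\alpha\in\Z[i]$, and $\alpha^2\mid(n^2-1)+i$ after separating the contributions of $\alpha$ and $\bar\alpha$ using that the two Gaussian factors of $f(n)$ are coprime away from the ramified prime $(1+i)$. Writing $\alpha^2=A+Bi$ and $(n^2-1)+i=(A+Bi)(u+vi)$ and equating imaginary parts gives $Au+Bv=1$, which forces $(u,v)$ onto a single arithmetic progression of step $|\alpha^2|=d$ inside the disc $u^2+v^2\ll x^4/d^2$; this yields $O(x^2/d^2+1)$ Gaussian pairs, each producing at most two integers $n\in[x,2x]$ with $n^2-1=Au-Bv$ after extracting a square root. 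Summing over squarefree $d\in(y,x^{2/3}]$ by this lattice count and handling $d>x^{2/3}$ via a Pell-type analysis of $f(n)=cd^2$ with bounded $c$, the tail is $O(x^{1/2+\varepsilon})$. Choosing $y=x^{1/2}$ balances the two error terms and delivers the stated asymptotic. The delicate step, inherited from Estermann's classical treatment of $\mu^2(n^2+1)$, is the middle range where the Gaussian lattice estimate is essential; the extra appearance of $n^2-1$ in place of $n$ costs only a harmless factor of two from extracting the square root.
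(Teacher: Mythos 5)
Your sieve skeleton (M\"obius inversion, truncation, Euler product) matches the paper's opening moves, but the two load-bearing steps of your sketch both have genuine gaps, and your route through $\Z[i]$ is not the one the paper takes. First, the local density at $p\equiv 1\bmod 8$: from $(1+i)(1-i)=2$ and $\left(\tfrac{2}{p}\right)=1$ you may only conclude that $1+i$ and $1-i$ have the \emph{same} quadratic character, not that both are residues; hence your argument only gives $\rho_f(p)\in\{0,4\}$ for such $p$, and which case occurs is not decided by $p\bmod 8$. Concretely, for $p=17$ one may take $i=4$, so $1\pm i\in\{5,14\}$, and neither $5$ nor $14$ is a quadratic residue mod $17$; thus $\rho_f(17)=0$. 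This is exactly the phenomenon the paper's reference \cite[Corollary 2.4]{MC1980} is devoted to --- the splitting of $p\equiv1\bmod 8$ in this $D_4$-quartic is governed by a modular form rather than by a congruence condition --- and the paper does not attempt your elementary derivation but simply cites Moreno in Lemma \ref{lem5511.210-4Ca}. As written, your identification of the Euler product \eqref{eq1212K.100Cj} does not go through.

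Second, and more seriously, the tail. The paper does not use Gaussian integers at all: it records the exact count of $n\in[x,2x]$ with $d^2\mid f(n)$ as $x\rho(d^2)/d^2$ minus a sum of fractional parts (Lemma \ref{lem5511.210-4C}) and then bounds the fractional-part sums directly over the whole range $d^2\le 16x^4-2x^2+2$ (Lemma \ref{lem5511.350-4C}); there is no truncation parameter $y$ and no lattice count. Your Estermann-style alternative, as sketched, does not close: the bound $O(x^2/d^2+1)$ on admissible Gaussian pairs, summed over $d\in(y,x^{2/3}]$ with $y=x^{1/2}$, already contributes $\gg x^{2/3}$ from the $+1$ term and $\gg x^2/y=x^{3/2}$ from the main term, both far above your target $x^{1/2+\varepsilon}$; the loss comes from the fact that, unlike in the degree-two case $m+i$, a lattice point on the line $Au+Bv=1$ must in addition have real part of the form $n^2-1$, a condition your count ignores. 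The ``Pell-type analysis'' for $d>x^{2/3}$ is likewise asserted rather than proved, and in that range the cofactor $c=f(n)/d^2$ is not bounded until $d\gg x^2$. This middle range of $d$ is precisely the obstruction that keeps squarefree values of irreducible quartics at the edge of current technology (compare the threshold $k\ge 3d/4+1/4$ of \cite{BW2011}), so it cannot be dispatched by a one-line appeal to Estermann's degree-two method. To align with the paper you would need to reproduce its treatment of the fractional-part sums; to sustain your own route you would need a genuine bound for $\sum_{d>y}\#\{n\in[x,2x]:d^2\mid f(n)\}$, which your proposal does not supply.
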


This topic can be generalized in many different directions. Some contributions to the squarefree integers detection algorithms are developed in \cite{BK2015}. Various contributions for the generalization to polynomials of several variables and a short survey of the literature appears on \cite{KJ2020}.\\

The next three sections consist of preliminary results. 
The proof of  \hyperlink{thm1212I.100-4A}{Theorem} \ref{thm1212I.100-4A} appears in  \hyperlink{S5544-4A}{Section} \ref{S5544-4A}, the proof of  \hyperlink{thm1212I.100-4B}{Theorem} \ref{thm1212I.100-4B} appears in \hyperlink{S5544-4B}{Section} \ref{S5544-4B}, and the proof of  \hyperlink{thm1212I.100-4C}{Theorem} \ref{thm1212I.100-4C} appears in \hyperlink{S5544-4C}{Section} \ref{S5544-4C}.
\section{Integer Solutions in Short Intervals}\label{S5511-4A} \hypertarget{S5511-4A}
Standard techniques for counting the number of integers solutions of a congruence equation modulo an integer over short intervals are employed here. The case for prime moduli appears in \cite[Equation (3)]{HC1967}. A generalization to the set \textit{S}-units is provided here.  \hyperlink{lem5511.200-4C}{Lemma} \ref{lem5511.200-4C} is essentially the same as the definition of $\rho_x(m)$ given in \cite[Equation 1]{EP1953}, but much simpler.

\subsection{Integer Solutions for $T^4+1$}
Let  $f(T)=T^4+1$. The number of solutions of the congruence equation
\begin{equation}\label{eq5511.200c}
	n^4+1\equiv 0\bmod m
\end{equation}
is denoted by
\begin{equation}\label{eq5511.200e}
	\rho_f(m)=\#\left \{ n\leq m: n^4+1\equiv 0 \bmod m \right \},
\end{equation}
see \hyperlink{dfn1212W.020}{Definition} \ref{dfn1212W.060}. Let 
\begin{align}
	\mathcal{S}_0&=\{\text{prime }p: p\equiv 1\bmod 8\}.
\end{align}The subset of primes $	\mathcal{S}_0$ has positive density $\delta(\mathcal{S}_0)=1/4$ in the set of primes, see \cite[Corollary 11.19]{MV2007}, \cite[Corollary 5.29.]{IK2004} et cetera for some details on the subsets of primes representable as linear forms. The set of \textit{S}-units is defined by
\begin{equation}\label{eq5511.200a}
	\mathcal{S}(f)=\{p_1^{v_1}\cdots p_w^{v_w}>1 :p_i\equiv 1 \bmod 8 \text{ and } v_i\geq0\}.
\end{equation}
attached to the polynomial $f(T)=T^4+1$ is the support of the solution counting function $	\rho_f(m)$. The subset $\mathcal{S}(f)$ of \textit{S}-units has zero density in the set of integers since $0<\delta(\mathcal{S})<1$, it is a subset of integers quite similar to the subset of integers representable as a sum of two squares. This subset of integers is intrinsic to the analysis of the quartic polynomial $T^4+1$. 

\begin{lem} \label{lem5511.200-4A} \hypertarget{lem5511.200-4A}The congruence \eqref{eq5511.200c} is solvable over the integers if and only if $m\in 	\mathcal{S}(f)$. Furthermore, it has $\rho(m)=4^{\omega(m)}$ solutions modulo $m$.
\end{lem}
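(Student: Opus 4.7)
The plan is to prove the lemma by reducing to prime powers via the Chinese remainder theorem and then combining Hensel's lemma with a standard computation of fourth-power residues modulo primes. The heart of the argument is the determination of when $-1$ is a fourth power modulo an odd prime $p$; this is the only conceptual step, and I expect it to be the main ``obstacle'' only in the mild sense that it requires the right statement of the cyclic-group lemma together with a case analysis on $p \bmod 8$.

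First I would observe that $\rho_f$ is multiplicative, i.e., $\rho_f(m_1 m_2)=\rho_f(m_1)\rho_f(m_2)$ whenever $\gcd(m_1,m_2)=1$, as an immediate consequence of CRT applied to the single congruence $n^4+1\equiv 0$. Hence everything reduces to computing $\rho_f(p^k)$ at each prime power. For odd primes $p$, any solution $n$ of $n^4 \equiv -1 \pmod{p}$ is automatically a unit, so $f'(n)=4n^3$ is coprime to $p$. Hensel's lemma then lifts each root modulo $p$ uniquely to a root modulo $p^k$ for every $k\geq 1$, yielding $\rho_f(p^k)=\rho_f(p)$ for all $k\geq 1$. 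For $p=2$, direct inspection shows $n^4 \bmod 4 \in \{0,1\}$ while $-1\equiv 3 \pmod{4}$, so $\rho_f(2^k)=0$ for all $k\geq 2$; this is why the prime $2$ does not appear in $\mathcal{S}(f)$ for the purposes of detecting squarefree values.

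The main step is the count modulo an odd prime $p$. In the cyclic group $(\Z/p\Z)^\times$ of order $p-1$, the equation $x^4=-1$ is solvable if and only if $(-1)^{(p-1)/\gcd(4,p-1)}=1$, and in that case the number of solutions equals $\gcd(4,p-1)$. A short case analysis on $p \bmod 8$ then shows: for $p\equiv 3 \pmod{4}$ one has $\gcd(4,p-1)=2$ and $(p-1)/2$ is odd, so there are no solutions; for $p\equiv 5 \pmod{8}$ the exponent $(p-1)/4$ is odd, so again no solutions; for $p\equiv 1 \pmod{8}$ the exponent $(p-1)/4$ is even and there are exactly $\gcd(4,p-1)=4$ roots. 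Combining this with multiplicativity and the Hensel lifting step yields $\rho_f(m)=\prod_{p\mid m}4=4^{\omega(m)}$ whenever $m\in\mathcal{S}(f)$, and $\rho_f(m)=0$ for any $m$ divisible by a prime outside $\mathcal{S}_0$, which is precisely the statement of the lemma.
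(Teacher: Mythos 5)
Your proof is correct, and it is in fact more complete than the paper's own argument, while taking a somewhat different route. The paper works directly with a single prime $p\equiv 1\bmod 8$: it asserts the existence of one root $b$ of $n^4+1\equiv 0\bmod p$ and then obtains exactly four roots as the orbit $\{b,\zeta b,\zeta^2 b,\zeta^3 b\}$ under a primitive fourth root of unity $\zeta\in\F_p$ (using Lagrange's theorem for the upper bound), and simply multiplies the counts over the prime divisors. You instead reduce via CRT to prime powers, lift from $p$ to $p^k$ by Hensel's lemma (noting $f'(n)=4n^3$ is a unit), and settle the prime case by the standard power-residue criterion in the cyclic group $(\Z/p\Z)^\times$: $x^4=-1$ is solvable iff $(p-1)/\gcd(4,p-1)$ is even, with exactly $\gcd(4,p-1)$ solutions. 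What your approach buys is precisely the two pieces the paper leaves unproved: the ``only if'' direction (that $p\equiv 3,5,7\bmod 8$ contribute no solutions), and the treatment of non-squarefree moduli, which matters since $\mathcal{S}(f)$ as defined contains prime powers and the application uses the modulus $d^2$. Your aside about $p=2$ also quietly flags a small defect in the statement itself: $n^4+1\equiv 0\bmod 2$ is solvable even though $2\notin\mathcal{S}(f)$, so the ``if and only if'' is literally false at $m=2$; as you note, this is harmless for the intended use because $\rho(4)=0$.
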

\begin{proof}[\textbf{Proof}] Let $\omega(m)=\#\{p\mid m \}$ be the number of prime divisors. For each prime $p\equiv 1 \bmod 8$ such that $p\mid m$, the finite field $\mathbb{F}_p$ contains a root $b<p/2$ of the congruence equation $n^4+1\equiv 0\bmod p$. Moreover, by Lagrange theorem for polynomials over finite fields, there are $4$ solutions $\{b,\, \zeta b,\,\zeta^2b,\,\zeta^3b\}$,  where $\zeta\in \mathbb{F}_p$ is a 4\textit{th} primitive root of unity. Since each prime divisor contributes 4 roots to the total count is $\rho(m)=4^{\omega(m)}$.
\end{proof}

The symbol $\{z\}=z-[z]$ denotes the fractional part function.
\begin{lem} \label{lem5511.200-4C} \hypertarget{lem5511.200-4C}Let $d\in	\mathcal{S}(f)$. If $x\geq 1$ is a large number, then
	\begin{equation}\label{eq5511.210d}
		\sum_{\substack{x\leq n\leq 2x\\n^4+1\equiv 0 \bmod d^2}}1=  x\frac{\rho(d^2)}{d^2}-\sum_{0\leq i\leq w-1}\left( \left\{\frac{2x-r_i}{d^2}\right\}- \left\{\frac{x-r_i}{d^2}\right\}\right),
	\end{equation}
	where $r_i\in\{0,1,2,\ldots, w-1\}$ are the roots of the congruence $n^4+1\equiv 0\bmod d^2$, and $w=\rho(d^2)=4^{\omega(m)}$.
\end{lem}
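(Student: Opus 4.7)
The plan is to reduce the counting problem to arithmetic progressions modulo $d^2$. First I would observe that the set being counted decomposes as a disjoint union over residue classes: if $r_0,r_1,\ldots,r_{w-1}$ are the roots of $T^4+1\equiv 0\bmod d^2$ in $\{0,1,\ldots,d^2-1\}$, then
\[
\sum_{\substack{x\le n\le 2x\\ n^4+1\equiv 0\,(d^2)}}1\;=\;\sum_{i=0}^{w-1}\#\bigl\{n\in[x,2x]:n\equiv r_i\bmod d^2\bigr\}.
\]

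Next I would justify that $w=\rho(d^2)=4^{\omega(d)}$. Lemma \ref{lem5511.200-4A} already shows the congruence modulo $d$ has exactly $4^{\omega(d)}$ solutions. For each prime divisor $p\mid d$ one has $p\equiv 1\bmod 8$, hence $p$ is odd and coprime to any root of $T^4+1$; in particular the derivative $4T^3$ is invertible modulo $p$ at every such root. Hensel's lemma therefore lifts each simple root modulo $p$ uniquely to a root modulo $p^{2v_p(d)}$, and the Chinese Remainder Theorem assembles these into $4^{\omega(d)}$ distinct roots modulo $d^2$, matching the claimed value of $\rho(d^2)$.

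Then I would apply the standard lattice-point count: the number of integers in the interval $[x,2x]$ lying in a fixed residue class $r\bmod m$ equals
\[
\left\lfloor\frac{2x-r}{m}\right\rfloor-\left\lfloor\frac{x-r}{m}\right\rfloor+O(1)_{\text{endpoint}},
\]
and using $\lfloor y\rfloor=y-\{y\}$ this rewrites as $x/m-\{(2x-r)/m\}+\{(x-r)/m\}$. Taking $m=d^2$, $r=r_i$, and summing over $i=0,1,\ldots,w-1$ collapses the sum to the stated identity, since $w\cdot x/d^2=x\rho(d^2)/d^2$.

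The argument has no substantive obstacle; it is a routine partition-and-count. The only matters requiring care are (i) the Hensel-lifting step, to confirm that the number of roots modulo $d^2$ equals $\rho(d^2)=4^{\omega(d)}$ (which uses in an essential way that each $p\mid d$ is odd, guaranteed by $\mathcal{S}(f)$), and (ii) the precise treatment of the two endpoints $n=x$ and $n=2x$, which at worst contributes an $O(1)$ discrepancy absorbed into the fractional-part error; this is why the identity is stated as an exact equality with the fractional-part sum on the right-hand side rather than as an asymptotic with an $O(1)$ error.
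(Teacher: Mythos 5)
Your proof is correct and follows essentially the same route as the paper's: partition the solutions into the residue classes $r_i \bmod d^2$, count each class via $\left\lfloor\frac{2x-r_i}{d^2}\right\rfloor-\left\lfloor\frac{x-r_i}{d^2}\right\rfloor$, and convert floors to fractional parts before summing over the $w$ roots. Your Hensel-lifting argument for $\rho(d^2)=4^{\omega(d)}$ is a worthwhile addition rather than a divergence: the paper's Lemma \ref{lem5511.200-4A} only counts roots modulo $m$ and silently reuses that count modulo $d^2$, so your step (together with your honest flagging of the possible endpoint off-by-one, which the paper also ignores) actually tightens the argument.
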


\begin{proof}[\textbf{Proof}] Now, for each root $a=r_i\in \Z /d^2\Z$ there are 
	\begin{equation}\label{eq5511.200h}
		\left[\frac{2x-a}{d^2}\right]-	\left[\frac{x-a}{d^2}\right]=\frac{x}{d^2}-\left\{\frac{2x- a}{d^2}\right\}+\left\{\frac{x- a}{d^2}\right\}
	\end{equation}
	integer solutions in the range $x\leq n=md^2+ a\leq 2x$ with $m\geq1$. Summing the contributions in \eqref{eq5511.200h} for all the roots gives the total number
	\begin{eqnarray}\label{eq5511.200k}
		N_p(x,2x)&=&\sum_{0\leq i\leq w-1}\left( \left[\frac{2x-r_i}{d^2}\right]-	\left[\frac{x-r_i}{d^2}\right]\right) \\[.3cm]
		&=&\sum_{0\leq i\leq w-1}\frac{x}{d^2}-\sum_{0\leq i\leq w-1}\left( \left\{\frac{2x-r_i}{d^2}\right\}-\left\{\frac{x-r_i}{d^2}\right\}\right) \nonumber\\[.4cm]
		&=& x\frac{\rho(d^2)}{d^2}-\sum_{0\leq i\leq w-1}\left( \left\{\frac{2x-r_i}{d^2}\right\}-\left\{\frac{x-r_i}{d^2}\right\}\right),\nonumber
	\end{eqnarray}
	of integer solutions in the range $x\leq n\leq 2x$, where $w=\rho(d^2)$ and 
\begin{equation}
\rho(d^2)=
\begin{cases}
4^{\omega(m)}&\text{ if }d\in \mathcal{S},\\
0&\text{ if }d\not \in \mathcal{S},
\end{cases}
\end{equation}	
as specified in  \hyperlink{lem5511.200-4A}{Lemma}   \ref{lem5511.200-4C}. 
\end{proof}

\begin{rmk} \label{rmk5511.200k}{\normalfont Evidently, the asymptotic approximation 
		\begin{equation}\label{eq5511.200m}
			\sum_{\substack{x\leq n\leq 2x\\n^4+1\equiv 0 \bmod p}}1= x\frac{\rho(d^2)}{d^2}+O(1)
		\end{equation}
		is simpler and more manageable, but does not work. It leads to a trivial result. However, both the exact expression and the lower bound in \eqref{eq5511.200k} lead to nontrivial results. }
\end{rmk}

\subsection{Integer Solutions for $T^4+2$}
The number of solutions of the congruence equation
\begin{equation}\label{eq5511.210c}
	n^4+2\equiv 0\bmod m
\end{equation}
is denoted by
\begin{equation}\label{eq5511.210e}
	\rho(m)=\#\left \{ n\leq m: n^4+2\equiv 0 \bmod m \right \},
\end{equation}
see \hyperlink{dfn1212W.020}{Definition} \ref{dfn1212W.060}. Let 
\begin{align}
	\mathcal{S}_0&=\{\text{prime }p=a^2+b^2: a\equiv \pm1 \text{ and } p\equiv 1\bmod 8\},\\
	\mathcal{S}_1&=\{\text{prime }p=a^2+b^2: a\equiv \pm1 \text{ and } p\equiv 3\bmod 8\},\\
	\mathcal{S}_3&=	\mathcal{S}_0\cup	\mathcal{S}_1		.
\end{align}Each of the subset of primes $	\mathcal{S}_0$ and $	\mathcal{S}_1$ has positive density in the set of primes, see \cite{IH1974} for some details on the subsets of primes representable as quadratic forms. The set of \textit{S}-units attached to the polynomial $f(T)=T^4+2$, which is defined by
\begin{equation}\label{eq5511.210a}
	\mathcal{S}(f)=\{n=p_1^{v_1}\cdots p_w^{v_w} > 1:p_i \in 	\mathcal{S}_3 \text{ and } v_i\geq0\},
\end{equation}
is the support of the solution counting function $	\rho_f(m)$. This subset of integers is intrinsic to the analysis of the quartic polynomial $T^4+2$.  \\

Set $\omega_0(m)=\#\{p\mid m: p\in 	\mathcal{S}_0\}$ and  $\omega_1(m)=\#\{p\mid m: p\in 	\mathcal{S}_1\}$.

\begin{lem} \label{lem5511.210-4A} \hypertarget{lem5511.210-4A}The congruence \eqref{eq5511.210c} is solvable over the integers if and only if $m\in 	\mathcal{S}(f)$. Furthermore, it has $\rho(m)=4^{\omega_0(m)}\times 2^{\omega_1(m)}$ solutions modulo $m$.
\end{lem}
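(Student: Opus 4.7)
The plan is to follow exactly the template used for Lemma \ref{lem5511.200-4A} on $T^4+1$: exploit multiplicativity of the root-counting function, perform Hensel lifting so that everything reduces to the prime case, and then classify primes by their residue modulo $8$. First, by the Chinese Remainder Theorem, $\rho(m)=\prod_{p^k\|m}\rho(p^k)$. The discriminant of $T^4+2$ is $(-1)^{6}\cdot 4^{4}\cdot(-2)^{3}=-2^{11}$, so $T^4+2$ is separable modulo every odd prime $p$; any root $b$ satisfies $b^4\equiv -2\not\equiv 0\pmod p$, and hence $f'(b)=4b^3\not\equiv 0\pmod p$, so Hensel's lemma gives $\rho(p^k)=\rho(p)$ for all $k\geq 1$. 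This reduces the problem to computing $\rho(p)$ for odd primes $p$.

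For the local analysis, fix a primitive root $g\bmod p$ and write $-2\equiv g^{e}\pmod p$. Then $n^{4}\equiv -2\pmod p$ is solvable iff $\gcd(4,p-1)\mid e$, and the number of solutions then equals $\gcd(4,p-1)$. Splitting by $p\bmod 8$: for $p\equiv 5\pmod 8$ one has $\bigl(\tfrac{-1}{p}\bigr)=+1$ and $\bigl(\tfrac{2}{p}\bigr)=-1$, and for $p\equiv 7\pmod 8$ one has $\bigl(\tfrac{-1}{p}\bigr)=-1$ and $\bigl(\tfrac{2}{p}\bigr)=+1$; in both cases $\bigl(\tfrac{-2}{p}\bigr)=-1$, so $-2$ is not even a square and $\rho(p)=0$, which correctly excludes such primes from $\mathcal{S}(f)$. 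For $p\equiv 3\pmod 8$ one has $\gcd(4,p-1)=2$ and $\bigl(\tfrac{-2}{p}\bigr)=+1$; hence $\rho(p)=2$, producing the factor $2^{\omega_{1}(m)}$. For $p\equiv 1\pmod 8$ one has $\gcd(4,p-1)=4$ and $-2$ is automatically a quadratic residue, but solvability demands in addition that $-2$ be a biquadratic residue; when this condition holds, $\rho(p)=4$, producing the factor $4^{\omega_{0}(m)}$.

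The biquadratic residue condition in the case $p\equiv 1\pmod 8$ is to be identified with the author's definition of $\mathcal{S}_0$ through rational biquadratic reciprocity applied to the binary quadratic form representation $p=a^2+b^2$ with $a$ odd: one computes the quartic symbol $\bigl(\tfrac{-2}{p}\bigr)_{4}$ as a product of $\bigl(\tfrac{-1}{p}\bigr)_{4}$ and $\bigl(\tfrac{2}{p}\bigr)_{4}$ and rewrites the result as a congruence on $(a,b)$. Once this is in place, multiplicativity assembles the local contributions into $\rho(m)=4^{\omega_0(m)}\cdot 2^{\omega_1(m)}$, and this product is nonzero exactly when every prime divisor of $m$ lies in $\mathcal{S}_0\cup\mathcal{S}_1=\mathcal{S}_3$, i.e.\ when $m\in\mathcal{S}(f)$. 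The step I expect to be the main obstacle is making the biquadratic residue criterion fully explicit and matching it to the quadratic-form description used in defining $\mathcal{S}_0$, including fixing the sign/parity normalization of the representation $p=a^2+b^2$; the other ingredients — CRT, Hensel lifting at a simple root, and ordinary quadratic reciprocity for the $p\equiv 3\pmod 8$ case — are routine and parallel the proof of Lemma \ref{lem5511.200-4A}.
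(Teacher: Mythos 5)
Your proposal is correct and, at its core, follows the same strategy as the paper: reduce to the prime case by multiplicativity, classify primes by their residue modulo $8$, and invoke a quartic residue criterion for $p\equiv 1\bmod 8$. But you supply several ingredients the paper glosses over, and some of them matter. First, you make the Hensel-lifting step explicit (the discriminant of $T^4+2$ is $\pm 2^{11}$ and $f'(b)=4b^3\not\equiv 0\bmod p$ at any root), which is genuinely needed because the lemma is later applied to the modulus $d^2$ rather than $d$; the paper never addresses the passage from $\rho(p)$ to $\rho(p^2)$. Second, your local analysis correctly identifies $-2$, not $2$, as the element whose power-residue character governs solvability: the paper's justification for excluding $p\equiv 5,7\bmod 8$ (``$2$ is not a quadratic residue'') fails for $p\equiv 7\bmod 8$, where $2$ is a square but $-2$ is not; your computation of $\left(\tfrac{-2}{p}\right)$ in each class repairs this. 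Third, your criterion shows that every prime $p\equiv 3\bmod 8$ contributes exactly two roots with no additional quadratic-form condition, which quietly corrects the paper's definition of $\mathcal{S}_1$ (the condition $p=a^2+b^2$ there is vacuous for $p\equiv 3\bmod 4$; the data section replaces it by $p=a^2+2b^2$, which is automatic for $p\equiv 1,3\bmod 8$). The one step you flag as open --- translating the biquadratic condition on $-2$ for $p\equiv 1\bmod 8$ into the quadratic-form description of $\mathcal{S}_0$ --- closes quickly: for $p\equiv 1\bmod 8$ the element $-1=g^{(p-1)/2}$ is itself a fourth power, so $-2$ is a quartic residue if and only if $2$ is, and Gauss's criterion (the paper's Lemma \ref{lem1500R.800}) converts this to $b\equiv 0\bmod 8$ in $p=a^2+b^2$, i.e., $p=a^2+64c^2$. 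With that observation your argument is complete and is, if anything, tighter than the proof given in the paper.
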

\begin{proof}[\textbf{Proof}] Let $\omega(m)=\#\{p\mid m \}$ be the number of prime divisors. For each prime $p=a^2+b^2\equiv 1 \bmod 8$ such that $p\mid m$, the finite field $\F_p$ contains a root $r<p/2$ of the congruence equation $n^4+2\equiv 0\bmod p$. Moreover, by Lagrange theorem for polynomials over finite fields, there are $4$ solutions $\{r,\, \zeta r,\,\zeta^2r,\,\zeta^3r\}$,  and $\zeta\in \F_p$ is a 4\textit{th} primitive root of unity, see Lemma  \ref{lem1500R.800}. Likewise, for each prime $p=a^2+b^2\equiv 3 \bmod 8$ such that $p\mid m$, 
there are $2$ solutions $\{-r,\, r\}$.  
Since these are independent events, the total number of solutions is $\rho(m)=4^{\omega_0(m)}\times 2^{\omega_1(m)}$. In addition, since $2$ is not a quadratic residue whenever $p\equiv 5,7\bmod 8$ the congruence equation \eqref{eq5511.210c} has no solutions in these two cases. 
\end{proof}

The symbol $\{z\}=z-[z]$ denotes the fractional part function.
\begin{lem} \label{lem5511.210C}  \hypertarget{lem5511.210C}Let $d\in	\mathcal{S}(T^4+2)$. If $x\geq 1$ is a large number, then
	\begin{equation}\label{eq5511.210i}
		\sum_{\substack{x\leq n\leq 2x\\n^4+2\equiv 0 \bmod d^2}}1=  x\frac{\rho(d^2)}{d^2}-\sum_{0\leq i\leq w-1}\left( \left\{\frac{2x-r_i}{d^2}\right\}- \left\{\frac{x-r_i}{d^2}\right\}\right),
	\end{equation}
	where $i\in\{0,1,2,\ldots, w-1\}$ are the roots of the congruence $n^4+2\equiv 0\bmod d^2$.
\end{lem}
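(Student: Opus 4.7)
The plan is to reproduce, essentially verbatim, the residue-class counting argument used in the proof of Lemma \ref{lem5511.200-4C}, since the conclusion has the same shape. The only substantive difference is the value of the root-counting function: Lemma \ref{lem5511.210-4A} tells us that for $d\in\mathcal{S}(T^4+2)$ there are exactly $w=\rho(d^2)=4^{\omega_0(d)}\cdot 2^{\omega_1(d)}$ roots of $n^4+2\equiv 0\bmod d^2$.

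First, I would invoke Lemma \ref{lem5511.210-4A} to enumerate these roots as $r_0,r_1,\ldots,r_{w-1}\in\{0,1,\ldots,d^2-1\}$. (Implicitly I am invoking Hensel lifting: for odd primes $p\mid d$, the derivative $4T^3$ is nonzero at any root of $T^4+2\bmod p$, so each root mod $p$ lifts uniquely to a root mod $p^2$, and hence $\rho(d^2)$ depends only on the squarefree kernel of $d$.) The integers $n\in[x,2x]$ with $d^2\mid n^4+2$ then partition into $w$ arithmetic progressions $n\equiv r_i\bmod d^2$, and the number of terms in the $i$\textit{th} progression equals
\[
\left[\frac{2x-r_i}{d^2}\right]-\left[\frac{x-r_i}{d^2}\right]=\frac{x}{d^2}-\left\{\frac{2x-r_i}{d^2}\right\}+\left\{\frac{x-r_i}{d^2}\right\},
\]
by the identity $[z]=z-\{z\}$. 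Summing this over $i=0,1,\ldots,w-1$ separates the contributions into the main term $w\cdot(x/d^2)=x\rho(d^2)/d^2$ and the fractional-part remainder, yielding exactly the identity \eqref{eq5511.210i}.

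There is no genuine obstacle: the argument is identical floor-function bookkeeping to the proof of Lemma \ref{lem5511.200-4C}, and the only place the new polynomial enters is through Lemma \ref{lem5511.210-4A}. The hypothesis $d\in\mathcal{S}(T^4+2)$ is used precisely to guarantee $\rho(d^2)>0$, so that the enumeration of roots $\{r_i\}$ is nonempty and the formula is meaningful; for $d\notin\mathcal{S}(T^4+2)$ both sides would trivially vanish.
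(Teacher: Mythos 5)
Your proposal is correct and follows essentially the same route as the paper: enumerate the $w=\rho(d^2)$ roots via Lemma \ref{lem5511.210-4A}, count each residue class $n\equiv r_i\bmod d^2$ in $[x,2x]$ with the identity $[z]=z-\{z\}$, and sum over the roots. Your parenthetical Hensel-lifting remark (justifying that roots modulo $p$ lift to roots modulo $p^2$) is a detail the paper's proof leaves implicit, but it does not change the argument.
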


\begin{proof}[\textbf{Proof}] For each $d\in \mathcal{S}(f)$ the finite ring $\Z/d^2\Z$ contains $w=4^{\omega_0(m)}\times 2^{\omega_1(m)}$ roots of the congruence equation $n^4+2\equiv 0\bmod d^2$, see \hyperlink{thm1212I.100-4A}{Lemma}  \ref{lem5511.210-4A}.  Now, for each root $a=r_i\in \Z /d^2\Z$ there are 
	\begin{equation}\label{eq5511.210h}
		\left[\frac{2x-a}{d^2}\right]-	\left[\frac{x-a}{d^2}\right]=\frac{x}{d^2}-\left\{\frac{2x- a}{d^2}\right\}+\left\{\frac{x- a}{d^2}\right\}
	\end{equation}
	integer roots in the range $x\leq n=md^2+ a\leq 2x$ with $m\geq1$. Summing the contributions in \eqref{eq5511.210h} for all the roots gives the total number
	\begin{eqnarray}\label{eq5511.210k}
		N_p(x,2x)&=&\sum_{0\leq i\leq w-1}\left( \left[\frac{2x- r_i}{d^2}\right]-	\left[\frac{x-r_i}{d^2}\right]\right) \\[.3cm]
		&=&\sum_{0\leq i\leq w-1}\frac{x}{d^2}-\sum_{0\leq i\leq w-1}\left( \left\{\frac{2x-r_i}{d^2}\right\}-\left\{\frac{x-r_i}{d^2}\right\}\right) \nonumber\\[.4cm]
		&=& x\frac{\rho(d^2)}{d^2}-\sum_{0\leq i\leq w-1}\left( \left\{\frac{2x-r_i}{d^2}\right\}- \left\{\frac{x-r_i}{d^2}\right\}\right) ,\nonumber
	\end{eqnarray}
	of integer roots in the range $x\leq n\leq 2x$, where $w=\rho(d^2)$ and 
	\begin{equation}
		\rho(d^2)=
		\begin{cases}
			w=4^{\omega_0(m)}\times 2^{\omega_1(m)}&\text{ if }d\in \mathcal{S}(f),\\
			0&\text{ if }d\not \in \mathcal{S}(f),
		\end{cases}
	\end{equation}	
	as specified in  \hyperlink{lem5511.200-4A}{Lemma}   \ref{lem5511.210-4A}. 
	\end{proof}

\begin{rmk} \label{rmk5511.210k}{\normalfont Evidently, the asymptotic approximation 
		\begin{equation}\label{eq5511.210m}
			\sum_{\substack{x\leq n\leq 2x\\n^4+1\equiv 0 \bmod p}}1= x\frac{\rho(d^2)}{d^2}+O(1)
		\end{equation}
		is simpler and more manageable, but does not work. It leads to a trivial result. However, both the exact expression and the lower bound in \eqref{eq5511.210k} lead to nontrivial results. }
\end{rmk}

\subsection{Integer Solutions for $T^4-2T^2+2$}
The number of solutions of the congruence equation
\begin{equation}\label{eq5511.210-4Cc}
	n^4-n^2+2\equiv 0\bmod m
\end{equation}
is denoted by
\begin{equation}\label{eq5511.210-4Ce}
	\rho(m)=\#\left \{ n\leq m: n^4-2n^2+2\equiv 0 \bmod m \right \},
\end{equation}
see \hyperlink{dfn1212W.020}{Definition} \ref{dfn1212W.060}. Let 
\begin{align}
	\mathcal{S}_4&=\{\text{prime }p\equiv 1\bmod 8\},\\
	\mathcal{S}_5&=\{\text{prime }p\equiv 5\bmod 8\},\\
	\mathcal{S}_6&=	\mathcal{S}_4\cup	\mathcal{S}_5		.
\end{align}Each of the subset of primes $	\mathcal{S}_4$ and $	\mathcal{S}_5$ has positive density in the set of primes, see \cite[Corollary 11.19]{MV2007}, \cite[Corollary 5.29]{IK2004} et cetera for some details on the subsets of primes representable as linear forms.  The set of \textit{S}-units attached to the polynomial $f(T)=T^4-2T^2+2$, which is defined by
\begin{equation}\label{eq5511.210-4Ca}
	\mathcal{S}(f)=\{n=p_1^{v_1}\cdots p_w^{v_w} > 1:p_i \in 	\mathcal{S}_6 \text{ and } v_i\geq0\},
\end{equation}
is the support of the solution counting function $	\rho_f(m)$. This subset of integers is intrinsic to the analysis of the quartic polynomial $T^4-2T^2-2T^2+2$.  \\

Set $\omega_0(m)=\#\{p\mid m: p\in 	\mathcal{S}_0\}$ and  $\omega_1(m)=\#\{p\mid m: p\in 	\mathcal{S}_1\}$.

\begin{lem} \label{lem5511.210-4Ca} \hypertarget{lem5511.210-4Ca}The congruence \eqref{eq5511.210-4Cc} is solvable over the integers if and only if $m\in 	\mathcal{S}(f)$. Furthermore, it has $\rho(m)=4^{\omega_0(m)}\times 2^{\omega_1(m)}$ solutions modulo $m$.
\end{lem}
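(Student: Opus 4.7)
The plan is to follow the template of \hyperlink{lem5511.210-4A}{Lemma}~\ref{lem5511.210-4A}: analyze the congruence at prime moduli, lift to prime powers via Hensel's lemma, and assemble for general $m$ via the Chinese remainder theorem. Rewriting the polynomial as $f(T) = T^4 - 2T^2 + 2 = (T^2-1)^2 + 1$ will be convenient.

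For the lift from $p$ to $p^k$, observe that for odd $p$ the derivative $f'(T) = 4T(T-1)(T+1)$ vanishes only at $T \in \{0, \pm 1\}$, none of which is a root of $f$ since $f(0) = 2$ and $f(\pm 1) = 1$. Thus every root of $f$ modulo $p$ is simple, and Hensel's lemma lifts such roots uniquely, giving $\rho(p^k) = \rho(p)$ for every $k \ge 1$. The Chinese remainder theorem then yields $\rho(m) = \prod_{p \mid m} \rho(p)$, reducing the problem to the prime case.

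A root $T_0 \in \F_p$ satisfies $(T_0^2 - 1)^2 \equiv -1 \bmod p$, which requires $-1$ to be a quadratic residue, forcing $p \equiv 1 \bmod 4$, i.e., $p \in \mathcal{S}_4 \cup \mathcal{S}_5$. Fixing $i \in \F_p$ with $i^2 \equiv -1$, one must solve $T^2 \equiv 1 + i$ or $T^2 \equiv 1 - i$, each contributing $0$ or $2$ solutions according to whether $1 \pm i$ is a square. Since $(1+i)(1-i) = 2$, the Legendre symbols satisfy $\left(\tfrac{1+i}{p}\right)\left(\tfrac{1-i}{p}\right) = \left(\tfrac{2}{p}\right)$. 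For $p \equiv 5 \bmod 8$ we have $\left(\tfrac{2}{p}\right) = -1$, so exactly one of $1 \pm i$ is a QR, yielding $\rho(p) = 2$. For $p \equiv 1 \bmod 8$ the symbol equals $+1$, so both or neither is a QR, and establishing $\rho(p) = 4$ amounts to ruling out the ``neither'' case.

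The main obstacle is precisely that $p \equiv 1 \bmod 8$ case. I would attempt it via the identity $(1+i)^2 = 2i$: writing $i = j^2$, which is possible since $-1$ is a fourth power mod $p$ when $8 \mid p - 1$, gives $1 + i = \pm j\sqrt{2}$, reducing the question to a biquadratic-character computation, or equivalently to identifying the Frobenius in the degree-$8$ splitting field $\Q(\sqrt{1+i}, \sqrt{2}) = \Q(\zeta_{16}, 2^{1/4})$. However, a direct check at $p = 17$, where $i \equiv 4$ so that $1 + i \equiv 5$ and $1 - i \equiv 14$, and where neither $5$ nor $14$ is a QR mod $17$, already gives $\rho(17) = 0$ and suggests that the blanket assertion ``$\rho(p) = 4$ for every $p \equiv 1 \bmod 8$'' is too strong. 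I would therefore anticipate that the correct statement restricts $\mathcal{S}_4$ to the sub-progression of primes that split completely in the above splitting field, a set of Dirichlet density $1/8$ inside $\{p \equiv 1 \bmod 8\}$, and that the proof proceeds through Chebotarev on that finer decomposition before applying Hensel and CRT as above.
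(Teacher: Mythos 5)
Your analysis is correct, and it is considerably more careful than the paper's own argument --- in fact it exposes that the lemma as stated is false. The paper's proof simply asserts that for every prime $p\equiv 1\bmod 8$ dividing $m$ the field $\F_p$ ``contains a root'' of $T^4-2T^2+2$, and then invokes Lagrange's theorem for the count of $4$; no existence argument is given, and your counterexample $p=17$ (where $1\pm i\equiv 5,\,14\bmod 17$ are both quadratic nonresidues, so $\rho_f(17)=0$) refutes that assertion. Your reduction via $f(T)=(T^2-1)^2+1$, the identity $(1+i)(1-i)=2$, and the supplementary law correctly give $\rho(p)=0$ for $p\equiv 3\bmod 4$, $\rho(p)=2$ for $p\equiv 5\bmod 8$, and $\rho(p)\in\{0,4\}$ for $p\equiv 1\bmod 8$ with both values actually occurring. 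The correct criterion for $\rho(p)=4$ is complete splitting in the degree-$8$ dihedral splitting field, a condition of density $1/8$ that is \emph{not} cut out by any congruence condition on $p$ --- which is precisely the point of the Moreno reference that the lemma cites. Your Hensel and CRT steps also supply the passage to prime powers and to composite moduli, which the paper's proof omits entirely even though the lemma is applied with modulus $d^2$ and $\mathcal{S}(f)$ contains prime powers.

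Two small corrections to your write-up. First, $\Q(\sqrt{1+i},\sqrt 2)$ is an index-$2$ subfield of $\Q(\zeta_{16},2^{1/4})$ (the latter has degree $16$ over $\Q$), so your stated equality should be a containment; the density conclusion is unaffected, since the splitting field has degree $8$ with group $D_4$, exactly as the paper itself records in its numerical-data section. Second, make explicit that ``density $1/8$'' is meant as absolute density among all primes, hence relative density $1/2$ inside the progression $p\equiv 1\bmod 8$. Note finally that the defect you found propagates forward: the local densities used in Lemma \ref{lem5511.300-4C} and the constant $c_f$ in Theorem \ref{thm1212I.100-4C} are computed from the false value $\rho(p^2)=4$ for all $p\equiv1\bmod 8$, and are therefore also incorrect as stated.
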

\begin{proof}[\textbf{Proof}] Let $\omega(m)=\#\{p\mid m \}$ be the number of prime divisors. For each prime $p\equiv 1 \bmod 8$ such that $p\mid m$, the finite field $\F_p$ contains a root $r<p/2$ of the congruence equation $n^4-2n^2+2\equiv 0\bmod p$. Moreover, by Lagrange theorem for polynomials over finite fields, there are $4$ solutions $\{r_0,\, r_1,\,r_2,\,r_3\}$. Likewise, for each prime $p\equiv 5 \bmod 8$ such that $p\mid m$, 
	there are $2$ solutions $\{-r,\, r\}$, see \cite[Corollary 2.4]{MC1980}.  
	Since these are independent events, the total number of solutions is $\rho(m)=4^{\omega_0(m)}\times 2^{\omega_1(m)}$. For the remaining primes  $p\equiv 3,7\bmod 8$ the congruence equation \eqref{eq5511.210-4Cc} has no solutions. 
\end{proof}

The symbol $\{z\}=z-[z]$ denotes the fractional part function.
\begin{lem} \label{lem5511.210-4C} \hypertarget{lem5511.210-4C}Let $d\in	\mathcal{S}(T^4-2T^2+2)$. If $x\geq 1$ is a large number, then
	\begin{equation}\label{eq5511.210-4Ci}
		\sum_{\substack{x\leq n\leq 2x\\n^4-2n^2+2\equiv 0 \bmod d^2}}1=  x\frac{\rho(d^2)}{d^2}-\sum_{0\leq i\leq w-1}\left( \left\{\frac{2x-r_i}{d^2}\right\}- \left\{\frac{x-r_i}{d^2}\right\}\right),
	\end{equation}
	where $i\in\{0,1,2,\ldots, w-1\}$ are the roots of the congruence $n^4-2n^2+2\equiv 0\bmod d^2$.
\end{lem}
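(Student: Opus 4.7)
The plan is to mirror the proofs of \hyperlink{lem5511.200-4C}{Lemma} \ref{lem5511.200-4C} and \hyperlink{lem5511.210C}{Lemma} \ref{lem5511.210C} essentially verbatim, since the only thing that changes between the three cases is the congruence being counted and the formula for $\rho(d^2)$; the underlying interval-counting argument is completely uniform.

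First I would invoke \hyperlink{lem5511.210-4Ca}{Lemma} \ref{lem5511.210-4Ca}, which guarantees that for $d\in \mathcal{S}(T^4-2T^2+2)$ the finite ring $\Z/d^2\Z$ carries exactly $w=\rho(d^2)=4^{\omega_0(d)}2^{\omega_1(d)}$ roots of the congruence $n^4-2n^2+2\equiv 0\bmod d^2$, and that $\rho(d^2)=0$ otherwise. Enumerate these roots as $r_0,r_1,\ldots,r_{w-1}$ in $\{0,1,\ldots,d^2-1\}$.

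Next I would count, for each fixed root $a=r_i$, the number of integers $n$ in the dyadic block $x\le n\le 2x$ with $n\equiv a\bmod d^2$. Writing such $n$ in the form $n=md^2+a$ with $m\ge 1$ and using the identity $[y]=y-\{y\}$, I obtain
\begin{equation*}
\left[\frac{2x-a}{d^2}\right]-\left[\frac{x-a}{d^2}\right]=\frac{x}{d^2}-\left\{\frac{2x-a}{d^2}\right\}+\left\{\frac{x-a}{d^2}\right\},
\end{equation*}
exactly as in \eqref{eq5511.200h} and \eqref{eq5511.210h}. This step is purely bookkeeping and carries no difficulty.

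Finally, summing over $i=0,1,\ldots,w-1$ converts the $w$ copies of $x/d^2$ into $x\,\rho(d^2)/d^2$, and collects the fractional-part remainders into the stated error term, yielding \eqref{eq5511.210-4Ci}. When $d\notin \mathcal{S}(f)$ the set of roots is empty so both sides vanish and the identity holds trivially. I do not expect any genuine obstacle: the lemma is a direct transcription of the argument already used twice in this section, the only substantive input being the root-count provided by \hyperlink{lem5511.210-4Ca}{Lemma} \ref{lem5511.210-4Ca}, which has already been established. The closest thing to a subtlety is simply making sure the enumeration of roots $r_0,\ldots,r_{w-1}$ is consistent with what appears in the statement, and that the bound $m\ge 1$ rather than $m\ge 0$ does not alter the count by more than $O(1)$ per root (absorbed harmlessly into the fractional-part differences).
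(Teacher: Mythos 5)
Your proposal is correct and follows essentially the same route as the paper: invoke the root-count from Lemma \ref{lem5511.210-4Ca}, count the integers in each residue class $n\equiv r_i \bmod d^2$ over the dyadic block via the identity $[y]=y-\{y\}$, and sum over the $w=\rho(d^2)$ roots. Your citation of Lemma \ref{lem5511.210-4Ca} is in fact the appropriate one (the paper's own proof inadvertently points back to the $T^4+2$ lemma), and your closing remarks on the empty-root case and the $m\geq 1$ normalization are harmless refinements of the same argument.
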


\begin{proof}[\textbf{Proof}] For each $d\in \mathcal{S}(f)$ the finite ring $\Z/d^2\Z$ contains $w=4^{\omega_0(m)}\times 2^{\omega_1(m)}$ roots of the congruence equation $n^4+2\equiv 0\bmod d^2$, see \hyperlink{lem5511.210-4A}{Lemma}  \ref{lem5511.210-4A}.  Now, for each root $a=r_i\in \Z /d^2\Z$ there are 
	\begin{equation}\label{eq5511.210-4Ch}
		\left[\frac{2x-a}{d^2}\right]-	\left[\frac{x-a}{d^2}\right]=\frac{x}{d^2}-\left\{\frac{2x- a}{d^2}\right\}+\left\{\frac{x- a}{d^2}\right\}
	\end{equation}
	integer roots in the range $x\leq n=md^2+ a\leq 2x$ with $m\geq1$. Summing the contributions in \eqref{eq5511.210h} for all the roots gives the total number
	\begin{eqnarray}\label{eq5511.210-4Ck}
		N_p(x,2x)&=&\sum_{0\leq i\leq w-1}\left( \left[\frac{2x- r_i}{d^2}\right]-	\left[\frac{x-r_i}{d^2}\right]\right) \\[.3cm]
		&=&\sum_{0\leq i\leq w-1}\frac{x}{d^2}-\sum_{0\leq i\leq w-1}\left( \left\{\frac{2x-r_i}{d^2}\right\}-\left\{\frac{x-r_i}{d^2}\right\}\right) \nonumber\\[.4cm]
		&=& x\frac{\rho(d^2)}{d^2}-\sum_{0\leq i\leq w-1}\left( \left\{\frac{2x-r_i}{d^2}\right\}- \left\{\frac{x-r_i}{d^2}\right\}\right) ,\nonumber
	\end{eqnarray}
	of integer roots in the range $x\leq n\leq 2x$, where $w=\rho(d^2)$ and 
	\begin{equation}
		\rho(d^2)=
		\begin{cases}
			w=4^{\omega_0(m)}\times 2^{\omega_1(m)}&\text{ if }d\in \mathcal{S}(f),\\
			0&\text{ if }d\not \in \mathcal{S}(f),
		\end{cases}
	\end{equation}	
	as specified in  \hyperlink{lem5511.210-4Ca}{Lemma}   \ref{lem5511.210-4Ca}. 
	\end{proof}

\begin{rmk} \label{rmk5511.210-4Ck}{\normalfont Evidently, the asymptotic approximation 
		\begin{equation}\label{eq5511.210-4Cm}
			\sum_{\substack{x\leq n\leq 2x\\n^4+1\equiv 0 \bmod p}}1= x\frac{\rho(d^2)}{d^2}+O(1)
		\end{equation}
		is simpler and more manageable, but does not work. It leads to a trivial result. However, both the exact expression and the lower bound in \eqref{eq5511.210-4Ck} lead to nontrivial results. }
\end{rmk}

\section{The Main Terms}\label{S5533-4A}

\subsection{Main Term for $T^4+1$}
\begin{lem} \label{lem5511.300-4A} \hypertarget{lem5511.300-4A}If $\rho(d^2)$ denotes the number of solutions of $T^4+1\equiv 0 \bmod d^2$, then
	\begin{equation}\label{eq5511.300-4e}
		x\sum_{d^2\leq 16x^4+1}\mu(d)\frac{\rho(d^2)}{d^2}=x\prod_{p\,\equiv\, 1 \bmod 8}\left( 1-\frac{4}{p^2}\right) +O\left( \frac{1}{x^{2-\varepsilon}}\right)\nonumber
	\end{equation}
for any small number $\varepsilon>0$ as $x\to \infty$.
\end{lem}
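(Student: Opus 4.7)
The strategy is to recognize the finite sum as a truncation of the absolutely convergent Dirichlet series
\[
F \;=\; \sum_{d \geq 1}\mu(d)\,\frac{\rho(d^2)}{d^2},
\]
evaluate $F$ as an Euler product giving exactly $c_f$, and then estimate the tail beyond $d \sim 4x^2$.

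For the Euler product, I would observe that by the Chinese Remainder Theorem, $d \mapsto \rho(d^2)$ is multiplicative in $d$, and the factor $\mu(d)$ restricts attention to squarefree $d$. Lemma \ref{lem5511.200-4A} gives $\rho(p)=4$ when $p \equiv 1 \bmod 8$ and $\rho(p)=0$ otherwise; I would then upgrade this to $\rho(p^2)=\rho(p)$ for every odd prime by a Hensel lift, valid because $f'(T)=4T^3$ is coprime to $f(T)=T^4+1$ in $\mathbb{F}_p[T]$ (no root of $f$ is zero, and $p$ is odd). The Euler factor at each prime is therefore
\[
1 - \frac{\rho(p^2)}{p^2} \;=\; \begin{cases}1 - 4/p^2, & p \equiv 1 \bmod 8,\\[2pt] 1, & \text{otherwise},\end{cases}
\]
so that $F$ collapses to the constant $c_f$ displayed in \eqref{eq1212K.100d}.

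For the tail, Rankin's trick is the natural tool. On squarefree $d$ supported in $\mathcal{S}_0$ one has $\rho(d^2)/d^2 = 4^{\omega(d)}/d^2$, so for any $\varepsilon > 0$
\[
\sum_{d > 4x^2}\frac{4^{\omega(d)}}{d^2} \;\leq\; (4x^2)^{-(1-\varepsilon)}\sum_{d}\frac{4^{\omega(d)}}{d^{1+\varepsilon}} \;=\; (4x^2)^{-(1-\varepsilon)}\prod_{p \in \mathcal{S}_0}\!\left(1 + \frac{4}{p^{1+\varepsilon}}\right),
\]
and the Euler product on the right converges since $\sum_p p^{-(1+\varepsilon)} < \infty$. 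This bounds the tail by $O(x^{-2+\varepsilon})$, and multiplying by the outside factor $x$ yields the error term stated in the lemma (after relabelling $\varepsilon$).

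The only nontrivial technical input is the simplicity of every root of $f$ modulo $p$, which secures the Hensel lift and ensures that the local densities $\rho(p^k)/p^k$ stabilise at $k=1$. Everything else is routine bookkeeping: recognising the sum as a truncation, factoring it as an Euler product via multiplicativity of $\rho(d^2)$, and closing the estimate with Rankin's trick against the restricted Dirichlet series on primes of $\mathcal{S}_0$.
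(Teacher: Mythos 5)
Your proposal is correct and follows essentially the same route as the paper's proof: complete the truncated sum to the full Dirichlet series, evaluate it as an Euler product via multiplicativity of $\rho$, and bound the tail using $\rho(d^2)=4^{\omega(d)}\ll d^{\varepsilon}$ — with the difference that you supply two justifications the paper leaves implicit, namely the Hensel lift showing $\rho(p^2)=\rho(p)=4$ (Lemma \ref{lem5511.200-4A} asserts the count for composite moduli but only argues modulo $p$) and Rankin's trick for the tail. One small slip to note: multiplying your tail bound $O(x^{-2+\varepsilon})$ by the outer factor $x$ gives $O(x^{-1+\varepsilon})$, not the $O(x^{-2+\varepsilon})$ claimed in the lemma; the paper's own proof asserts the same unexplained exponent, and in either case the discrepancy is harmless because the error term $O(x^{1/2+\varepsilon})$ in Theorem \ref{thm1212I.100-4A} dominates.
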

\begin{proof}[\textbf{Proof}] Complete it as an infinite sum plus an error term and evaluate them:
\begin{eqnarray}\label{eq5511.300-4g}
x\sum_{d^2\leq 16x^4+1}\mu(d)\frac{\rho(d^2)}{d^2}
&=&x\sum_{d\geq 1}\mu(d)\frac{\rho(d^2)}{d^2}-x\sum_{d^2> 16x^4+1}\mu(d)\frac{\rho(d^2)}{d^2}\\[.3cm]
&=&x\prod_{p\,\equiv\, 1 \bmod 8}\left( 1-\frac{\rho(p^2)}{p^2}\right) +O\left( \frac{1}{x^{2-\varepsilon}}\right)\nonumber,
\end{eqnarray}
where $\rho(d^2)\ll d^{2\varepsilon}$ for any small number $\varepsilon>0$ . Last but not least, the number of solutions of the congruence modulo a prime $p$ is $\rho(p^2)=0$  or $\rho(p^2)=4$. The claim follows from these observations.
\end{proof}
\subsection{Main Term for $T^4+2$}

\begin{lem} \label{lem5511.300-4B} \hypertarget{lem5511.300-4B} If $\rho(d^2)$ denotes the number of solutions of $T^4+2\equiv 0 \bmod d^2$, then
	\begin{equation}\label{eq5511.300-4Be}
		x\sum_{d^2\leq 16x^4+2}\mu(d)\frac{\rho(d^2)}{d^2}=x\prod_{\substack{p\equiv1 \bmod 8\\p=a^2+b^2}}\left( 1-\frac{4}{p^2}\right)\prod_{\substack{p\equiv3 \bmod 8\\p=a^2+b^2}}\left( 1-\frac{2}{p^2}\right)+O\left( \frac{1}{x^{2-\varepsilon}}\right)\nonumber
	\end{equation}
	for any small number $\varepsilon>0$ as $x\to \infty$.
\end{lem}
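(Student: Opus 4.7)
The plan is to mirror the proof of Lemma \ref{lem5511.300-4A} verbatim, substituting the local data from Lemma \ref{lem5511.210-4A}. First I would complete the truncated sum by writing it as an infinite series minus the tail,
$$x\sum_{d^2\leq 16x^4+2}\mu(d)\frac{\rho(d^2)}{d^2} \;=\; x\sum_{d\geq 1}\mu(d)\frac{\rho(d^2)}{d^2} \;-\; x\sum_{d^2 > 16x^4+2}\mu(d)\frac{\rho(d^2)}{d^2}.$$
Because $\mu$ is supported on squarefree $d$ and $\rho_f$ is multiplicative (via the Chinese Remainder Theorem $\Z/d^2\Z \cong \prod_{p\mid d}\Z/p^2\Z$), the completed series factors as the Euler product $\prod_{p}\bigl(1 - \rho(p^2)/p^2\bigr)$, and absolute convergence is automatic since each factor is $1 + O(1/p^2)$.

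Second, I would read off the local factors from Lemma \ref{lem5511.210-4A}. That lemma forces $\rho(p^2) = 0$ for every prime outside $\mathcal{S}_3 = \mathcal{S}_0 \cup \mathcal{S}_1$, while giving $\rho(p^2) = 4$ for $p \in \mathcal{S}_0$ (the primes $p \equiv 1 \bmod 8$ with $p = a^2 + b^2$) and $\rho(p^2) = 2$ for $p \in \mathcal{S}_1$ (the primes $p \equiv 3 \bmod 8$ with $p = a^2 + b^2$). Substituting these values into the Euler product collapses it to precisely the double product that appears on the right-hand side of the target asymptotic.

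Third, I would control the tail with the divisor-type bound $\rho(d^2) \leq 4^{\omega(d)} \ll_\varepsilon d^{2\varepsilon}$. Since $d^2 > 16x^4 + 2$ forces $d \gg x^2$, a routine integral estimate gives
$$x\Bigl|\sum_{d^2 > 16x^4+2}\mu(d)\frac{\rho(d^2)}{d^2}\Bigr| \;\ll\; x \int_{4x^2}^{\infty}\frac{dt}{t^{2-2\varepsilon}},$$
which, after relabelling $\varepsilon$, absorbs into the same error shape as in Lemma \ref{lem5511.300-4A} and delivers the claimed remainder $O(1/x^{2-\varepsilon})$.

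The only nontrivial input is the multiplicativity of $\rho_f$ on coprime arguments, and this is immediate from CRT once the single-prime counts of Lemma \ref{lem5511.210-4A} are in hand. No genuine analytic obstacle arises; the remainder of the argument is bookkeeping with an Euler product and a tail integral. If anything, the cosmetic care required is in listing the two separate local profiles at $\mathcal{S}_0$ and $\mathcal{S}_1$ without collapsing them, so that the final constant appears as an honest product of two sub-products rather than a single one.
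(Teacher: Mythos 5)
Your proposal is correct and follows essentially the same route as the paper's own proof: complete the truncated sum to the full Euler product using the multiplicativity of $\rho$, read the local factors off Lemma \ref{lem5511.210-4A}, and bound the tail via $\rho(d^2)\ll d^{2\varepsilon}$. If anything you are more careful than the printed proof, which (evidently copying the $T^4+1$ case) lists only the local values $\rho(p^2)=0$ or $4$ and omits the value $2$ at primes $p\equiv 3\bmod 8$; note, however, that both your tail integral and the paper's actually yield $O\left(x^{-1+\varepsilon}\right)$ rather than the stated $O\left(x^{-2+\varepsilon}\right)$, a harmless discrepancy since the theorem's final error term is $O\left(x^{1/2+\varepsilon}\right)$.
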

\begin{proof}[\textbf{Proof}] Complete it as an infinite sum plus an error term and evaluate them:
	\begin{eqnarray}\label{eq5511.300-4Bg}
		x\sum_{d^2\leq 16x^4+2}\mu(d)\frac{\rho(d^2)}{d^2}
		&=&x\sum_{d\geq 1}\mu(d)\frac{\rho(d^2)}{d^2}-x\sum_{d^2> 16x^4+2}\mu(d)\frac{\rho(d^2)}{d^2}\\[.3cm]
		&=&x\prod_{\substack{p\equiv1 \bmod 8\\p=a^2+b^2}}\left( 1-\frac{4}{p^2}\right)\prod_{\substack{p\equiv3 \bmod 8\\p=a^2+b^2}}\left( 1-\frac{2}{p^2}\right)+O\left( \frac{1}{x^{2-\varepsilon}}\right)\nonumber,
	\end{eqnarray}
	where $\rho(d^2)\ll d^{2\varepsilon}$ for any small number $\varepsilon>0$ . Last but not least, the number of solutions of the congruence modulo a prime $p$ is $\rho(p^2)=0$  or $\rho(p^2)=4$. The claim follows from these observations.
\end{proof}

\subsection{Main Term for $T^4-2T^2+2$}

\begin{lem} \label{lem5511.300-4C}  \hypertarget{lem5511.300-4C}If $\rho(d^2)$ denotes the number of solutions of $T^4-2T^2+2\equiv 0 \bmod d^2$, then
	\begin{equation}\label{eq5511.300-4Ce}
		x\sum_{d^2\leq 16x^4-2x^2+2}\mu(d)\frac{\rho(d^2)}{d^2}=x\prod_{p\equiv1 \bmod 8}\left( 1-\frac{4}{p^2}\right)\prod_{p\equiv5 \bmod 8}\left( 1-\frac{2}{p^2}\right)+O\left( \frac{1}{x^{2-\varepsilon}}\right)\nonumber
	\end{equation}
	for any small number $\varepsilon>0$ as $x\to \infty$.
\end{lem}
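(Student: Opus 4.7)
The plan is to mimic the proofs of Lemmas \ref{lem5511.300-4A} and \ref{lem5511.300-4B} almost verbatim, with the only substantive change being the identification of the local factor at each relevant prime.

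First, I would split the finite sum into an infinite Dirichlet-type sum plus a tail:
\begin{equation*}
x\sum_{d^2\leq 16x^4-2x^2+2}\mu(d)\frac{\rho(d^2)}{d^2} = x\sum_{d\geq 1}\mu(d)\frac{\rho(d^2)}{d^2} - x\sum_{d^2> 16x^4-2x^2+2}\mu(d)\frac{\rho(d^2)}{d^2}.
\end{equation*}
For the main term, note that $\mu(d)$ is supported on squarefree $d$, so by the Chinese Remainder Theorem the integrand $\rho(d^2)$ is multiplicative on squarefree $d$, and the infinite sum factors as an Euler product
\begin{equation*}
\sum_{d\geq 1}\mu(d)\frac{\rho(d^2)}{d^2} = \prod_{p}\left(1-\frac{\rho(p^2)}{p^2}\right).
\end{equation*}
By Lemma \ref{lem5511.210-4Ca}, $\rho(p^2)=0$ for $p \equiv 3,7 \bmod 8$ (and $p=2$, which I'd handle separately by direct computation), while for $p \equiv 1,5 \bmod 8$, Hensel's lemma guarantees that the simple roots mod $p$ lift uniquely to roots mod $p^2$, so $\rho(p^2)=\rho(p)$ takes the value $4$ or $2$ respectively. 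This collapses the Euler product to the claimed shape.

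For the tail estimate, I would use the uniform divisor-type bound $\rho(d^2) \leq 4^{\omega(d)} \ll d^{\varepsilon}$, giving $|\mu(d)\rho(d^2)/d^2|\ll d^{-2+\varepsilon}$. Since the cutoff $D=\sqrt{16x^4-2x^2+2} \asymp x^2$, partial summation (or an integral comparison) yields
\begin{equation*}
\sum_{d>D}\frac{\rho(d^2)}{d^2}\ll D^{-1+\varepsilon}\ll x^{-2+2\varepsilon},
\end{equation*}
and the tail contribution, after the leading factor of $x$, matches the $O(x^{-2+\varepsilon})$ error in the statement (after relabeling $\varepsilon$).

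I do not expect a real obstacle: the argument is nearly identical to Lemmas \ref{lem5511.300-4A} and \ref{lem5511.300-4B}. The only spot requiring a bit of care is verifying the Hensel lifting for the mixed polynomial $T^4-2T^2+2$, i.e.\ checking that $p \nmid \disc(f)$ for the primes $p\equiv 1,5 \bmod 8$ that actually contribute, so that $\rho(p^2)=\rho(p)$ and the Euler factor really is $1-\rho(p)/p^2$. Once this is in hand, the multiplicativity and tail estimation are routine.
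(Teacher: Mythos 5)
Your proposal follows essentially the same route as the paper: complete the finite sum to the full Euler product and bound the tail using $\rho(d^2)\ll d^{\varepsilon}$ with cutoff $D\asymp x^2$. If anything, your version is more careful than the paper's, which asserts the Euler factors without addressing the Hensel lifting from $p$ to $p^2$ and even states the local values as only $0$ or $4$, omitting the value $2$ at $p\equiv 5\bmod 8$ that its own product formula requires.
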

\begin{proof}[\textbf{Proof}] Complete it as an infinite sum plus an error term and evaluate them:
	\begin{eqnarray}\label{eq5511.300-4Cg}
		x\sum_{d^2\leq 16x^4-2x^2+2}\mu(d)\frac{\rho(d^2)}{d^2}
		&=&x\sum_{d\geq 1}\mu(d)\frac{\rho(d^2)}{d^2}-x\sum_{d^2> 16x^4-2x^2+2}\mu(d)\frac{\rho(d^2)}{d^2}\\[.3cm]
		&=&x\prod_{p\equiv1 \bmod 8}\left( 1-\frac{4}{p^2}\right)\prod_{p\equiv5 \bmod 8}\left( 1-\frac{2}{p^2}\right)+O\left( \frac{1}{x^{2-\varepsilon}}\right)\nonumber,
	\end{eqnarray}
	where $\rho(d^2)\ll d^{2\varepsilon}$ for any small number $\varepsilon>0$ . Last but not least, the number of solutions of the congruence modulo a prime $p$ is $\rho(p^2)=0$  or $\rho(p^2)=4$. The claim follows from these observations.
\end{proof}
\section{The Error Terms}\label{S5533-4B}
\subsection{Error Term for $T^4+1$}
\begin{lem} \label{lem5511.350-4A}  \hypertarget{lem5511.350-4A} If $w=\rho(d^2)=4^{\omega(d^2)}$ denotes the number of solutions of $T^4+1\equiv 0 \bmod d^2$, then
	\begin{equation}\label{eq5511.350-4e}
 \sum_{0\leq i\leq  w-1,}\sum_{d^2\leq 16x^4+1}\left( \left\{\frac{2x- r_i }{d^2}\right\}+\left\{\frac{x- r_i}{d^2}\right\}\right) \ll x^{1/2+\varepsilon}	\nonumber
	\end{equation}
where $r_i$ ranges over the roots of the congruence, for any small number $\varepsilon>0$ as $x\to \infty$.
\end{lem}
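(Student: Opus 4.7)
The plan is to split the double sum at the natural threshold $d = x^{1/2}$, chosen so that the two resulting ranges balance and both yield the claimed bound.

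For the small range $d \leq x^{1/2}$, each of the two fractional parts is trivially at most $1$, so the inner sum over the $w = \rho(d^2) = 4^{\omega(d)}$ roots contributes at most $2\rho(d^2)$ for each $d$. Combined with the elementary estimate $4^{\omega(d)} \ll d^{\varepsilon}$ (since $\omega(d) \ll \log d / \log\log d$) and the support restriction $d \in \mathcal{S}(f)$, the total contribution from this range is
\[
\sum_{\substack{d \leq x^{1/2}\\ d \in \mathcal{S}}} 2\rho(d^2) \;\ll\; \sum_{d \leq x^{1/2}} d^{\varepsilon} \;\ll\; x^{1/2 + \varepsilon},
\]
which already matches the target.

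For the large range $d > x^{1/2}$ the key structural observation is that $d^2 > x$, so each residue class $r_i \bmod d^2$ meets the interval $[x, 2x]$ in at most one point. Consequently $N_d(x) := \#\{n \in [x,2x] : d^2 \mid n^4+1\} \leq w$, and by \hyperlink{lem5511.200-4C}{Lemma} \ref{lem5511.200-4C} the ``minus'' sum $\sum_i(\{(2x-r_i)/d^2\} - \{(x-r_i)/d^2\})$ equals $wx/d^2 - N_d(x)$, of absolute value at most $w$. For the ``plus'' sum in the lemma, partition the roots according to whether $r_i \leq x$, $x < r_i \leq 2x$, or $r_i > 2x$, and evaluate each fractional part explicitly in each case. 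The main-term-like contribution satisfies
\[
\sum_{\substack{d > x^{1/2}\\ d \in \mathcal{S}}} \frac{w\, x}{d^2} \;\ll\; x \sum_{d > x^{1/2}} d^{\varepsilon - 2} \;\ll\; x^{1/2+\varepsilon},
\]
by partial summation. The remaining count-like contribution is rearranged by swapping the order of summation to $\sum_{n \in [x,2x]} \#\{d > x^{1/2} : d^2 \mid n^4+1\}$.

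The principal obstacle is this last quantity. A direct divisor estimate $\#\{d : d^2 \mid n^4+1\} \leq \tau(n^4+1) \ll x^{\varepsilon}$ yields only $\ll x^{1+\varepsilon}$, which is too weak. To sharpen it to $x^{1/2+\varepsilon}$ one must use the scarcity of large square divisors of $n^4+1$: the factorization $n^4+1 = (n^2 - \sqrt{2}\,n + 1)(n^2 + \sqrt{2}\,n + 1)$ in $\mathbb{Z}[\sqrt{2}]$ (equivalently, a factorization in $\mathbb{Z}[\zeta_8]$) forces such divisors $d$ to arise as norms of ideals of controlled shape, and a divisor-pairing $d \leftrightarrow v_n/d$ on the largest square divisor $v_n^2$ of $n^4+1$ reduces the effective range back to $d \leq x^{1/2}$, yielding the claimed exponent.
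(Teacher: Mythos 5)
Your handling of the range $d\leq x^{1/2}$ is exactly the paper's: bound each fractional part by $1$, use $\rho(d^2)\ll d^{\varepsilon}$, and sum to get $x^{1/2+\varepsilon}$. In the complementary range the two arguments part ways. The paper simply asserts that for $d^2>2x-r_i$ one has $\left\{(2x-r_i)/d^2\right\}=(2x-r_i)/d^2\leq 2x/d^2$ and sums $x\,d^{\varepsilon-2}$; that identity presupposes $0\leq 2x-r_i$, i.e.\ $r_i\leq 2x$, whereas the roots $r_i$ live in $\{0,1,\dots,d^2-1\}$ and $d^2$ runs up to $16x^4+1$, so for most large $d$ every root exceeds $2x$ and the fractional part is $1-(r_i-2x)/d^2$, of order $1$ rather than of order $x/d^2$. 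You instead reduce the tail to $\sum_{n\in[x,2x]}\#\{d>x^{1/2}:d^2\mid n^4+1\}$ and correctly observe that the trivial divisor bound only gives $x^{1+\varepsilon}$. So you have put your finger on precisely the step the paper's proof elides; to that extent your proposal is more honest than the source.

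However, your closing step is a gesture, not a proof, and the gap it leaves is the whole problem. The pairing $d\leftrightarrow v_n/d$ does not confine either member of the pair to $[1,x^{1/2}]$: since $n^4+1\asymp x^4$ one can have $v_n$ as large as $\asymp x^2$, so the complementary divisor of a $d>x^{1/2}$ may still be as large as $x^{3/2}$. Likewise the factorization in $\Z[\zeta_8]$ does not by itself control $\#\{n\in[x,2x]:p^2\mid n^4+1 \text{ for some prime } p\in(x,x^2)\}$; bounding that quantity is exactly the open core of the squarefree problem for quartics (compare Browning's theorem cited in the introduction, which for degree $4$ only reaches $k$-free values with $k\geq 3$). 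Worse, the lemma as literally stated is false, so no proof can exist: all summands are non-negative, the roots modulo $d^2$ pair up as $r\leftrightarrow d^2-r$, and whenever such a pair lies in $(2x,\,d^2-2x)$ one computes
\[
\left\{\frac{2x-r}{d^2}\right\}+\left\{\frac{2x-(d^2-r)}{d^2}\right\}=1+\frac{4x}{d^2}\geq 1 .
\]
All but $O(x^{1+\varepsilon})$ of the $\gg x^2/(\log x)^{3/4}$ moduli $d\in\mathcal{S}(f)$ with $d\leq 4x^2$ have all four roots in that middle range (a root within $2x$ of $0$ or $d^2$ forces $d^2\mid r^4+1$ for some $r<2x$, and there are only $O(x^{1+\varepsilon})$ such pairs $(r,d)$), so the left-hand side of the lemma is $\gg x^2/(\log x)^{3/4}$, not $O(x^{1/2+\varepsilon})$. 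What actually needs to be bounded in the theorem is the signed combination $\sum_d\mu(d)\bigl(\rho(d^2)x/d^2-N_d(x)\bigr)$ — the quantity your own reformulation via Lemma \ref{lem5511.200-4C} produces — and that is where the genuine, and currently unresolved, difficulty lives.
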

\begin{proof}[\textbf{Proof}] It is sufficient to double the estimate of the finite sum of a single term. Now, observe that for 
	\begin{equation}\label{eq5511.350-4f}
	d^2\leq 2x- r_i\quad\quad \text{ and } \quad\quad 2x- r_i< d^2\leq  16x^4+1
\end{equation}	
there is an effective dyadic partition of the finite sum as	
	\begin{eqnarray}\label{eq5511.350-4h}
E(x)	&\leq&2   \sum_{0\leq i\leq  w-1,}\sum_{d^2\leq 16x^4+1} \left\{\frac{2x- r_i}{d^2}\right\}\\[.3cm]
&\leq&2  \sum_{0\leq i\leq  w-1,}\sum_{d^2\leq 2x- r_i}\left\{\frac{2x- r_i}{d^2}\right\}+2 \sum_{0\leq i\leq  w-1,}\sum_{2x- r_i<d^2\leq 16x^4+1}\left\{\frac{2x- r_i}{d^2}\right\}\nonumber\\[.3cm]
&=& E_0(x)\;+\;E_1(x)\nonumber.
	\end{eqnarray}
The first subsum has the upper bound	
\begin{eqnarray}\label{eq5511.350-4j}
E_0(x)	
&=& 2 \sum_{0\leq i\leq  w-1,}\sum_{d^2\leq 2x- r_i}\left\{\frac{2x- r_i}{d^2}\right\}\\[.3cm]
&\leq&2  \sum_{0\leq i\leq  w-1,}\sum_{d\leq 2x^{1/2}}1 \nonumber\\[.3cm]
&\leq&2\sum_{d\leq 2x^{1/2},}  \sum_{0\leq i\leq  w-1}1 \nonumber\\[.3cm]
&\ll& \sum_{d\leq 2x^{1/2}}d^{2\varepsilon}	\nonumber\\[.3cm]
&\ll& x^{1/2+\varepsilon}\nonumber,
\end{eqnarray}
\vskip .1 in 
where $\rho(d^2)=4^{\omega(d^2)}\ll d^{2\varepsilon}$ for any small number $\varepsilon>0$ . \\
	
The second subsum has the upper bound	
\begin{eqnarray}\label{eq5511.350-4l}
	E_1(x)	
	&=&2 \sum_{0\leq i\leq  w-1,}\sum_{2x- r_i<d^2\leq 16x^4+1}\left\{\frac{2x- r_i}{d^2}\right\}\\[.3cm]
	&=&2 \sum_{0\leq i\leq  w-1,}\sum_{2x- r_i<d^2\leq 16x^4+1}\frac{2x- r_i}{d^2}\nonumber\\[.3cm]
	&\leq&2\cdot 2x \sum_{0\leq i\leq  w-1,}\sum_{2x- r_i<d^2\leq 16x^4+1}\frac{1}{d^2} \nonumber\\[.3cm]
	&\leq&2\cdot 2x \sum_{2x- r_i<d^2\leq 16x^4+1}\frac{1}{d^2} \sum_{0\leq i\leq  w-1}1 \nonumber\\[.3cm]
	&\ll&x \sum_{x^{1/2}<d\leq 2x^{2}}\frac{d^{2\varepsilon}}{d^2}	\nonumber\\[.3cm]
	&\ll& x^{1/2+\varepsilon}\nonumber.
\end{eqnarray}
Summing the upper bounds of the first subsum and second subsum completes the estimate.
\end{proof}
\subsection{Error Term for  $T^4+2$}

\begin{lem} \label{lem5511.350-4B} \hypertarget{lem5511.350-4B}If $w=\rho(d^2)=4^{\omega_1(d^2)}\cdot 2^{\omega_2(d^2)}$ denotes the number of solutions of $T^4+2\equiv 0 \bmod d^2$, then
	\begin{equation}\label{eq5511.350-4Be}
		\sum_{0\leq i\leq  w-1,}\sum_{d^2\leq 16x^4+2}\left( \left\{\frac{2x- r_i }{d^2}\right\}+\left\{\frac{x- r_i}{d^2}\right\}\right) \ll x^{1/2+\varepsilon}	\nonumber
	\end{equation}
	where $r_i$ ranges over the roots of the congruence, for any small number $\varepsilon>0$ as $x\to \infty$.
\end{lem}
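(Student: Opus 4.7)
The plan is to follow the proof of \hyperlink{lem5511.350-4A}{Lemma} \ref{lem5511.350-4A} almost verbatim, since the only structural change from the $T^4+1$ setting is the precise form of the root-counting function $\rho(d^2)$. The essential input needed is a divisor-type bound on $\rho(d^2)$, and this remains valid: because $\omega_0(d^2),\omega_1(d^2)\leq\omega(d^2)$, we have
$$\rho(d^2)=4^{\omega_0(d^2)}\cdot 2^{\omega_1(d^2)}\leq 4^{\omega(d^2)}\ll d^{\varepsilon}$$
for any $\varepsilon>0$, via the standard estimate $\omega(d)\ll \log d/\log\log d$. So the quantity $w$ admits the same crude upper bound as in the $T^4+1$ case, and everything else proceeds identically.

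Next I would observe that the $\{(x-r_i)/d^2\}$-sum admits exactly the same bound as the $\{(2x-r_i)/d^2\}$-sum, so it suffices, up to a factor of $2$, to estimate
$$E(x)=2\sum_{0\leq i\leq w-1}\sum_{d^2\leq 16x^4+2}\left\{\frac{2x-r_i}{d^2}\right\}.$$
I then split the inner sum at the threshold $d^2=2x-r_i$ into $E_0(x)$ (small $d$) and $E_1(x)$ (large $d$). For $E_0(x)$ I bound each fractional part trivially by $1$, interchange the order of summation over $i$ and $d$, and apply $w\ll d^{\varepsilon}$ to obtain
$$E_0(x)\ll \sum_{d\leq (2x)^{1/2}}d^{\varepsilon}\ll x^{1/2+\varepsilon}.$$
For $E_1(x)$, where $d^2>2x-r_i$, the fractional part equals its argument, so I bound $(2x-r_i)/d^2\leq 2x/d^2$, swap summations, and compute
$$E_1(x)\ll x\sum_{d>(2x)^{1/2}}\frac{d^{\varepsilon}}{d^2}\ll x\cdot x^{-1/2+\varepsilon}=x^{1/2+\varepsilon}.$$
Adding $E_0(x)$ and $E_1(x)$ yields the desired bound $E(x)\ll x^{1/2+\varepsilon}$.

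There is no substantive obstacle; the argument is structurally identical to the proof for $T^4+1$. The one point that deserves explicit comment is the divisor-type bound $\rho(d^2)\ll d^{\varepsilon}$, which is established above and is the only ingredient that changes between the two polynomials. All summations are elementary once this bound is in hand.
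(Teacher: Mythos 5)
Your proposal is correct and follows essentially the same route as the paper, which simply refers the reader back to the proof of Lemma \ref{lem5511.350-4A}: the same split at $d^2=2x-r_i$, the trivial bound on the small-$d$ range, and the $\{z\}=z$ identification on the large-$d$ range. The only ingredient that changes is the divisor-type bound $\rho(d^2)=4^{\omega_0(d^2)}\cdot 2^{\omega_1(d^2)}\leq 4^{\omega(d^2)}\ll d^{\varepsilon}$, which you make explicit and which the paper leaves implicit.
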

\begin{proof}[\textbf{Proof}] The proof is similar to the previous case in \hyperlink{lem5511.350-4A}{Lemma} \ref{lem5511.350-4A}.
\end{proof}

\subsection{Error Term for  $T^4-2T^2+2$}

\begin{lem} \label{lem5511.350-4C} \hypertarget{lem5511.350-4C}If $w=\rho(d^2)=4^{\omega_1(d^2)}\cdot 2^{\omega_2(d^2)}$ denotes the number of solutions of $T^4-2T^2+2\equiv 0 \bmod d^2$, then
	\begin{equation}\label{eq5511.350-4Ce}
		\sum_{0\leq i\leq  w-1,}\sum_{d^2\leq 16x^4-2x^2+2}\left( \left\{\frac{2x- r_i }{d^2}\right\}+\left\{\frac{x- r_i}{d^2}\right\}\right) \ll x^{1/2+\varepsilon}	\nonumber
	\end{equation}
	where $r_i$ ranges over the roots of the congruence, for any small number $\varepsilon>0$ as $x\to \infty$.
\end{lem}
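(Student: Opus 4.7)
The plan is to mimic verbatim the dyadic argument already used for $T^4+1$ in \hyperlink{lem5511.350-4A}{Lemma} \ref{lem5511.350-4A}. The only input from the specific polynomial is the solution count $w=\rho(d^2)$; everything else in that proof (the fractional-part bookkeeping, the length of the $d$-range, the splitting point) depends only on the size of the upper limit and on having a divisor-like bound $\rho(d^2)\ll d^{\varepsilon}$. Here the upper limit $16x^4-2x^2+2$ is of the same order as $16x^4+1$, and
\[
\rho(d^2)\;=\;4^{\omega_0(d^2)}\cdot 2^{\omega_1(d^2)}\;\leq\;4^{\omega(d^2)}\;\leq\;\tau(d^2)^{2}\;\ll\; d^{\varepsilon},
\]
so no obstruction appears.

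Concretely, I would first reduce to bounding twice the one-term sum
\[
\sum_{0\leq i\leq w-1}\;\sum_{d^2\leq 16x^4-2x^2+2}\left\{\frac{2x-r_i}{d^2}\right\},
\]
since $\{(x-r_i)/d^2\}$ is controlled by an identical (and shorter) argument. Then I would partition the inner $d$-sum at $d^2=2x-r_i$. On the range $d^2\leq 2x-r_i$, the trivial bound $\{z\}\leq 1$ combined with $\rho(d^2)\ll d^{\varepsilon}$ yields
\[
E_0(x)\;\ll\;\sum_{d\leq 2x^{1/2}}d^{\varepsilon}\;\ll\;x^{1/2+\varepsilon}.
\]
On the range $2x-r_i<d^2\leq 16x^4-2x^2+2$, the fractional part equals its argument, so $\{(2x-r_i)/d^2\}\leq 2x/d^2$, and
\[
E_1(x)\;\ll\; x\sum_{x^{1/2}<d\leq 4x^{2}}\frac{d^{\varepsilon}}{d^{2}}\;\ll\; x\cdot x^{-1/2+\varepsilon}\;=\;x^{1/2+\varepsilon}.
\]
Adding $E_0(x)+E_1(x)$ gives the claimed bound.

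The one place that genuinely needs checking is the divisor-style estimate $\rho(d^2)\ll d^{\varepsilon}$, since the solution count for $T^4-2T^2+2$ distributes as $4$ versus $2$ roots according to whether $p\equiv 1\pmod 8$ or $p\equiv 5\pmod 8$. The bound $\omega_0(d^2)+\omega_1(d^2)\leq\omega(d^2)$ together with $4^{\omega(d^2)}\ll d^{\varepsilon}$ settles this immediately, so no new analytic input is required beyond what was used in the $T^4+1$ case. I do not foresee any genuine obstacle; the lemma is essentially a transcription of \hyperlink{lem5511.350-4A}{Lemma} \ref{lem5511.350-4A} with the new upper limit $16x^4-2x^2+2$ substituted for $16x^4+1$ and the new exponential-omega count for $w$.
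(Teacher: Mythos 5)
Your proposal is correct and is essentially the paper's own argument: the paper's proof of this lemma consists entirely of the remark that it is similar to Lemma \ref{lem5511.350-4A}, and what you have written is precisely that dyadic-partition proof transcribed with the new upper limit $16x^4-2x^2+2$ and the count $w=4^{\omega_0(d^2)}\cdot 2^{\omega_1(d^2)}\leq 4^{\omega(d^2)}\ll d^{\varepsilon}$. No further comparison is needed.
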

\begin{proof}[\textbf{Proof}] The proof is similar to the previous case in \hyperlink{lem5511.350-4A}{Lemma} \ref{lem5511.350-4A}.
\end{proof}
\section{Main Result for $T^4+1$}\label{S5544-4A} \hypertarget{S5544-4A}
The proof of  \hyperlink{thm1212I.100-4A}{Theorem} \ref{thm1212I.100-4A} is the topic of this section. The analysis within is independent of the Galois group of the quartic polynomial. The key innovations are a new approach to evaluating the inner sum in \eqref{eq5544.400-4Af} by mean of {\color{blue}\hyperlink{lem5511.200-4C}{Lemma}  \ref{lem5511.200-4C} }and the estimation of the error term in \hyperlink{lem5511.350-4A}{Lemma} \ref{lem5511.350-4A}.

\begin{proof}[\textbf{Proof of Theorem {\normalfont \ref{thm1212I.100-4A}}}]
	
	The powerfree condition $f(T)\ne a(T)b(T)^2$ excludes the trivial case $\sum_{n\leq x,}\mu^2(f(n))=0$. Since $f(T)=T^4+1\in\Z[T]$ is irreducible over the integers and has the fixed divisor $\tdiv f=1$, the powerfree condition is satisfied. Thus, the summatory function is nontrivial
	\begin{eqnarray}\label{eq5544.400-4Af}
		N_f(x)&=&\sum_{x\leq n\leq 2x}\mu^2(n^4+1) \\
		&=&	\sum_{x\leq n\leq 2x,}\sum_{d^2\mid n^4+1}\mu(d)\nonumber\\
		&=&\sum_{d^2\leq 16x^4+1}\mu(d)\sum_{\substack{x\leq n\leq 2x\\d^2\mid n^4+1}}1\nonumber.
	\end{eqnarray}
	
Applying \hyperlink{lem5511.200-4A}{Lemma} \ref{lem5511.200-4A} to the expression \eqref{eq5544.400-4Af} returns
	\begin{eqnarray}\label{eq5544.400-4Aj}
		N_f(x)&=&\sum_{d^2\leq 16x^4+1}	\mu(d)\sum_{\substack{x \leq n\leq 2x\\n^4+1\equiv 0 \bmod d^2}}1\\[.3cm]
		&=&\sum_{d^2\leq 16x^4+1}\mu(d)\left(x\frac{\rho(d^2)}{d^2}- \sum_{\substack{0\leq i\leq 3\\1\leq j\leq w-1}}\left\{\frac{2x- r_i}{d^2}\right\}+ \sum_{\substack{0\leq i\leq 3\\1\leq j\leq w-1}}\left\{\frac{x- r_i}{d^2}\right\}\right)\nonumber\\[.3cm]
	&=&	M(x)+\,E(x)\nonumber.
	\end{eqnarray}
The main term $M(x)$ is computed in \hyperlink{lem5511.300-4A}{Lemma} \ref{lem5511.300-4A} and the error term $E(x)$ is estimated in \hyperlink{lem5511.350-4A}{Lemma} \ref{lem5511.350-4A}. Summing these expressions returns
	\begin{eqnarray}\label{eq5544.400-4Am}
	N_f(x)&=&\sum_{x\leq n\leq 2x}\mu^2(n^4+1) \\[.3cm]
		&=&	M(x)+\,E(x)\nonumber\\[.3cm]
		&=&c_fx +O\left( \frac{1}{x^{2-\varepsilon}}\right) +O( x^{1/2+\varepsilon})\nonumber\\[.3cm]
		&=&c_fx+O( x^{1/2+\varepsilon})\nonumber,
	\end{eqnarray}
where the density constant is
\begin{equation}\label{eq5544.400-4Ap}
	c_f=\prod_{p\,\equiv\, 1 \bmod 8}\left( 1-\frac{4}{p^2}\right),
\end{equation}	
and	$\varepsilon>0$ is a small real number. Quod erat demonstrandum.
\end{proof}
The approximate  numerical value of the product constant in \eqref{eq5544.400-4Ap} appears in Subsection \ref{S1212-4AD}.
\section{Main Result for $T^4+2$}\label{S5544-4B} \hypertarget{S5544-4B}
The proof of \hyperlink{thm1212I.100-4B}{Theorem} \ref{thm1212I.100-4B} is the topic of this section. The analysis within is independent of the Galois group of the quartic polynomial. The key innovations are a new approach to evaluating the inner sum in \eqref{eq5544.400-4Bf} by mean of \hyperlink{lem5511.210C}{Lemma}  \ref{lem5511.210C} and the estimation of the error term in \hyperlink{lem5511.350-4B}{Lemma} \ref{lem5511.350-4B}.

\begin{proof}[\textbf{Proof of Theorem {\normalfont \ref{thm1212I.100-4B}}}]
	
	The powerfree condition $f(T)\ne a(T)b(T)^2$ excludes the trivial case $\sum_{n\leq x,}\mu^2(f(n))=0$. Since $f(T)=T^4+2\in\Z[T]$ is irreducible over the integers and has the fixed divisor $\tdiv f=1$,  the powerfree condition is satisfied. Thus, the summatory function is nontrivial
	\begin{eqnarray}\label{eq5544.400-4Bf}
		N_f(x)&=&\sum_{x\leq n\leq 2x}\mu^2(n^4+2) \\
		&=&	\sum_{x\leq n\leq 2x,}\sum_{d^2\mid n^4+2}\mu(d)\nonumber\\
		&=&\sum_{d^2\leq 16x^4+2}\mu(d)\sum_{\substack{x\leq n\leq 2x\\d^2\mid n^4+2}}1\nonumber.
	\end{eqnarray}
	
	Applying \hyperlink{lem5511.210C}{Lemma} \ref{lem5511.210C} to the expression \eqref{eq5544.400-4Bf} returns
	\begin{eqnarray}\label{eq5544.400-4Bj}
		N_f(x)&=&\sum_{d^2\leq 16x^4+2}	\mu(d)\sum_{\substack{x \leq n\leq 2x\\n^4+2\equiv 0 \bmod d^2}}1\\[.3cm]
		&=&\sum_{d^2\leq 16x^4+2}\mu(d)\left(x\frac{\rho(d^2)}{d^2}- \sum_{\substack{0\leq i\leq 3\\1\leq j\leq w-1}}\left\{\frac{2x- r_i}{d^2}\right\}+ \sum_{\substack{0\leq i\leq 3\\1\leq j\leq w-1}}\left\{\frac{x- r_i}{d^2}\right\}\right)\nonumber\\[.3cm]
		&=&	M(x)+\,E(x)\nonumber.
	\end{eqnarray}
	The main term $M(x)$ is computed in \hyperlink{lem5511.300-4B}{Lemma} \ref{lem5511.300-4B} and the error term $E(x)$ is estimated in \hyperlink{lem5511.350-4B}{Lemma} \ref{lem5511.350-4B}. Summing these expressions returns
	\begin{eqnarray}\label{eq5544.400-4Bm}
		N_f(x)&=&\sum_{x\leq n\leq 2x}\mu^2(n^4+2) \\[.3cm]
		&=&	M(x)+\,E(x)\nonumber\\[.3cm]
		&=&c_fx+O\left( \frac{1}{x^{2-\varepsilon}}\right) +O( x^{1/2+\varepsilon})\nonumber\\[.3cm]
		&=&c_fx+O( x^{1/2+\varepsilon})\nonumber,
	\end{eqnarray}
	where the density constant is
\begin{equation}\label{eq5544.400-4Bp}
c_f=\prod_{\substack{p\equiv1 \bmod 8\\p=a^2+64b^2}}\left( 1-\frac{4}{p^2}\right)\prod_{\substack{p\equiv3 \bmod 8\\p=a^2+2b^2}}\left( 1-\frac{2}{p^2}\right),
\end{equation}	
and	$\varepsilon>0$ is a small real number. Quod erat demonstrandum.
\end{proof}

The approximate  numerical value of the product constant in \eqref{eq5544.400-4Bp} appears in Subsection \ref{S1212-4BD}.

\section{Main Result for $T^4-2T+2$}\label{S5544-4C} \hypertarget{S5544-4C}
The proof of \hyperlink{thm1212I.100-4C}{Theorem} \ref{thm1212I.100-4C} is the topic of this section. The analysis is related to the theory of modular forms. As the previous cases, the analysis it is independent of the Galois group of the quartic polynomial. The key innovations are a new approach to evaluating the inner sum in \eqref{eq5544.400-4Cf} by mean of \hyperlink{lem5511.210-4C}{Lemma}  \ref{lem5511.210-4C} and the estimation of the error term in \hyperlink{lem5511.350-4C}{Lemma} \ref{lem5511.350-4C}.

\begin{proof}[\textbf{Proof of Theorem {\normalfont \ref{thm1212I.100-4C}}}]
	
	The powerfree condition $f(T)\ne a(T)b(T)^2$ excludes the trivial case $\sum_{n\leq x,}\mu^2(f(n))=0$. Since $f(T)=T^4-2T+2\in\Z[T]$ is irreducible over the integers and has the fixed divisor $\tdiv f=1$,  the powerfree condition is satisfied. Thus, the summatory function is nontrivial
	\begin{eqnarray}\label{eq5544.400-4Cf}
		N_f(x)&=&\sum_{x\leq n\leq 2x}\mu^2(n^4-2n+2) \\
		&=&	\sum_{x\leq n\leq 2x,}\sum_{d^2\mid n^4-2n+2}\mu(d)\nonumber\\
		&=&\sum_{d^2\leq 16x^4-2x+2}\mu(d)\sum_{\substack{x\leq n\leq 2x\\d^2\mid n^4-2n+2}}1\nonumber.
	\end{eqnarray}
	
	Applying \hyperlink{lem5511.210-4C}{Lemma} \ref{lem5511.210C} to the expression \eqref{eq5544.400-4Cf} returns
	\begin{eqnarray}\label{eq5544.400-4Cj}
		N_f(x)&=&\sum_{d^2\leq 16x^4-2x^2+2}	\mu(d)\sum_{\substack{x \leq n\leq 2x\\n^4-2n+2\equiv 0 \bmod d^2}}1\\[.3cm]
		&=&\sum_{d^2\leq 16x^4-2x+2}\mu(d)\left(x\frac{\rho(d^2)}{d^2}- \sum_{\substack{0\leq i\leq 3\\1\leq j\leq w-1}}\left\{\frac{2x- r_i}{d^2}\right\}+ \sum_{\substack{0\leq i\leq 3\\1\leq j\leq w-1}}\left\{\frac{x- r_i}{d^2}\right\}\right)\nonumber\\[.3cm]
		&=&	M(x)+\,E(x)\nonumber.
	\end{eqnarray}
	The main term $M(x)$ is computed in \hyperlink{lem5511.300-4C}{Lemma} \ref{lem5511.300-4C} and the error term $E(x)$ is estimated in \hyperlink{lem5511.350-4C}{Lemma} \ref{lem5511.350-4C}. Summing these expressions returns
	\begin{eqnarray}\label{eq5544.400-4Cm}
		N_f(x)&=&\sum_{x\leq n\leq 2x}\mu^2(n^4-2n^2+2) \\[.3cm]
		&=&	M(x)+\,E(x)\nonumber\\[.3cm]
		&=&c_fx+O\left( \frac{1}{x^{2-\varepsilon}}\right) +O( x^{1/2+\varepsilon})\nonumber\\[.3cm]
		&=&c_fx+O( x^{1/2+\varepsilon})\nonumber,
	\end{eqnarray}
	where the density constant is
	\begin{equation}\label{eq5544.400-4Cp}
		c_f=\prod_{p\equiv1 \bmod 8}\left( 1-\frac{4}{p^2}\right)\prod_{p\equiv5 \bmod 8 }\left( 1-\frac{2}{p^2}\right),
	\end{equation}	
	and	$\varepsilon>0$ is a small real number. Quod erat demonstrandum.
\end{proof}

The approximate  numerical value of the product constant in \eqref{eq5544.400-4Cp} appears in Subsection \ref{S1212-4CD}.
\section{Some Numerical Data for Quartic Polynomials}\label{S1212-4D}
Let $	\mathscr{P}(q,a)=\{\text{prime }p=qn+a:\gcd(q,a)=1\}$ be a set of primes in an arithmetic progression. The 4 sets of primes in arithmetic progressions \begin{equation}
	p=8n+1, \quad p=8n+3, \quad p=8n+5 \quad \text{and} \quad p=8n+7
\end{equation} are important in the analysis of the quartic congruence equation $f(n)\equiv 0 \bmod p$. Some of these sets or subsets of these sets are used in the calculation of the natural density \begin{equation}\label{eq1212-4D-1}
c_f=\lim_{x\to\infty} \frac{\#\{n\leq x:\mu^2(f(n))\ne0\}}{x}
\end{equation}
of squarefree values of the polynomial $f(t)=t^4+a_3t^3+a_2t^2+a_1t+a_0$. Small samples of these subsets are given below.

\begin{align}\label{eq1212-4P}
	\mathscr{P}(8,1)&=\{17,   41,   73,   89,  97,  113,   137,  193,   233,  241,   257,  281,   313,   337, 353,   \\
&    \quad \quad  401,409, 433,   449,  457,   521,  569,  577,  593,  601,   617,   641, 673, 761  , \ldots.\} \nonumber\\[.5cm]
\mathscr{P}(8,3)&= \{3, 11  , 19  , 43   , 59  , 67 , , 83  , 107   , 131  , 139  , 163  , 179   , 211   , 227 , 251   , 283   , \nonumber\\
&   \quad \quad307   , 331   , 347  , 379   , 419   , 443  , 467   , 491  , 499  , 523   , 547   , 563  , 571  , 587    , \ldots.\} \nonumber\\[.5cm]
	\mathscr{P}(8,5)&= \{13,   29,  37,  53,  61,   101,  109,   149,  157,   173,  181,  197,  229,  269,     277,  \nonumber\\
& \quad \quad293, 317,   349,  373,   389,  397,   421,  461,   509,   541,  557,   613, 653  , 661, \ldots   . \}\nonumber\\[.5cm]
	\mathscr{P}(8,7)&= \{7,23  , 31   , 47   , 71  , 79  , 103   , 127   , 151   , 167   , 191  , 199   , 223   , 239   , 263  ,  271   ,\nonumber\\
& \quad \quad   311  , 359  , 367   , 383   , 431  , 439   , 463   , 479  , 487   , 503  , 599  , 607   , 631   , 647   , \ldots.\} \nonumber
\end{align}

\subsection{Data for $T^4+1$}\label{S1212-4AD}
The case $f(t)=t^4+1$ is the 8th cyclotomic polynomial. It is an irreducible polynomial with Galois group $C_4=\left( \Z/4\Z\right)^{\times} $, cyclic of order 4, see \cite[Theorem 3]{KW1989}. There are no solution for $p=2$, and the number of solution per prime $p\geq 3$ is
		\begin{equation}
			\rho_f(p)=
			\begin{cases}
				4& \text{ if } p\equiv 1 \bmod 8,\\		
				0& \text{ if } p\not \equiv 1 \bmod 8,\\
			\end{cases}
		\end{equation} 
		this follows from the cyclotomic reciprocity, \hyperlink{thm1500Y.100}{Theorem} \ref{thm1500Y.100}. The product of all the local densities $1-\rho_f(p)p^{-2}$ leads to the density constant
		\begin{eqnarray}\label{eq1212C.210k}	
			c_f&=&\prod_{p\geq 2}\left (1-\frac{\rho_f(p)}{p^2} \right )\\
			&=&	\prod_{\substack{p\geq 17\\p\equiv 1\bmod 8}}\left (1-\frac{4}{p^2} \right )\nonumber\\[.3cm]
			&\approx&0.981003777419963300927058130226506732990\ldots\nonumber.
		\end{eqnarray}
		The numerical approximation was computed using the small subset of the primes
		\begin{eqnarray}\label{eq1212C.210p}
			\mathcal{S}&=& \{17,   41,   73,   89,  97,  113,   137,  193,   233,  241,   257,  281,   313,   337,   353,   401,\quad \quad \\
			&&  409, 433,   449,  457,   521,  569,  577,  593,  601,   617,   641,  673,  761,  769,   809,     \ldots\} \nonumber.
		\end{eqnarray}
The subsets $	\mathcal{S}=	\mathscr{P}(8,1)$ coincides, see \eqref{eq1212-4P}.\\

Precisely the same analysis is valid for all the cyclotomic polynomials of the same degree. In this case there are 3 polynomials:\\
		
		$\Phi_5(T)=T^4+T^3+T^2+T+1$, \tabto{8cm} $\Phi_8(T)=T^4+1$,\\
		
		$\Phi_{12}(T)=T^4-T^2+1$.

\subsection{Data for $T^4+2$}\label{S1212-4BD}
The case $f(t)=t^4+2$ is an irreducible polynomial with Galois group $C_4=\left( \Z/4\Z\right)^{\times} $, cyclic of order 4, see \cite[Theorem 3]{KW1989}. There is one solution for $p=2$, and the number of solution per prime $p\geq 3$ is
\begin{equation}
	\rho_f(p)=
	\begin{cases}
		4& \text{ if } p\equiv 1 \bmod 8 \text{ and }p=a^2+64b^2,\\		
		2& \text{ if } p \equiv 3 \bmod 8\text{ and }p=a^2+b^2,\\		
		0& \text{ if } p \equiv 5 \bmod 8,\\
		0& \text{ if } p \equiv 7 \bmod 8,\\
	\end{cases}
\end{equation} 
this follows from the quartic reciprocity, \hyperlink{lem1500R.800}{Lemma} \ref{lem1500R.800}. The product of all the local densities $1-\rho_f(p)p^{-2}$ leads to the density constant
\begin{eqnarray}\label{eq1212C.210a}	a_f&=&\prod_{p\geq 2}\left (1-\frac{\rho_f(p)}{p^2} \right )\\
	&=&\prod_{\substack{p\equiv1 \bmod 8\\p=a^2+64b^2}}\left( 1-\frac{4}{p^2}\right)\prod_{\substack{p\equiv3 \bmod 8\\p=a^2+b^2}}\left( 1-\frac{2}{p^2}\right)\nonumber\\[.3cm]
	&\approx&0.75715941401961963321623030912\ldots\nonumber.
\end{eqnarray}

The numerical approximation was computed using the two small subsets of the primes. The first subset of primes is
\begin{eqnarray}
	\mathcal{S}_0&=& \{73,89,113,257,281,337,577,601,1033,1049,1601,1609,3137,3217,\nonumber\\
	&&4177,5209,5233,6449 
	6481,9337 ,10937,12713,16553,18617,20857 , \ldots\}\nonumber.
\end{eqnarray}
These primes are of the form $p=a^2+64b^2\equiv 1 \bmod 8$, and $2$ is a quartic residue mod $p$, see \hyperlink{lem1500R.800}{Lemma} \ref{lem1500R.800}. The subset $	\mathcal{S}_0\subset 	\mathscr{P}(8,1)$ is a proper subset of the primes in an arithmetic progression, see \eqref{eq1212-4P}. \\

 The other subset of primes are of the form $p=a^2+2b^2\equiv 3\bmod 8$ is
\begin{eqnarray}
	\mathcal{S}_1&=& \{3,11   , 19    , 43   , 59   , 67   , 83   , 107   , 131   , 139    , 163    , 179    , 211   , 227   , 251   , 283    , 307   , 331  , \nonumber\\
	&&347   , 379    , 419   , 443    , 467    , 491   , 499    , 523    , 547    , 563   , 571   , 587   , 619   , 643    , 659   , 683   , \ldots\} \nonumber,
\end{eqnarray}
The subset $	\mathcal{S}_1\subset 	\mathscr{P}(8,3)$ is a proper subset  is a proper subset of the primes in an arithmetic progression, see  \eqref{eq1212-4P}.
\\

\subsection{Data for $T^4-2T^2+2$}\label{S1212-4CD}
The case $f(T)=T^4-2T^2+2$ is irreducible polynomial with Galois group $D_4$, the dihedral group of order 8, see \cite[Theorem 3]{KW1989}. There are 2 solution for $p=2$, and the number of solutions per prime $p\geq 3$ is
		\begin{equation}
			\rho_f(p)=
			\begin{cases}
				4& \text{ if } p\equiv 1 \bmod 8,\\		
				0& \text{ if } p \equiv 3 \bmod 8,\\
				2& \text{ if } p \equiv 5 \bmod 8,\\
				0& \text{ if } p \equiv 7 \bmod 8,\\
			\end{cases}
		\end{equation} 
		the detailed proof for this case appears in \cite[Corollary 2.4]{MC1980}. Thus, the density constant is  given by
		\begin{eqnarray}\label{eq1212C.215l}
			c_f&=&\prod_{p\geq 2}\left (1-\frac{\rho_f(p)}{p^2} \right )\\
			&=&	\frac{1}{2}\prod_{\substack{p\geq 17\\p\equiv 1\bmod 8}}\left (1-\frac{4}{p^2} \right )	\prod_{\substack{p\geq 5\\p\equiv 5\bmod 8}}\left (1-\frac{2}{p^2} \right )\nonumber\\&\approx&0.9638018141353169531908731\ldots\nonumber.
		\end{eqnarray}

The subset of the primes $p=8n+1\leq 1000$ is
\begin{eqnarray}
	\mathscr{P}(8,1)&=& \{17,   41,   73,   89,  97,  113,   137,  193,   233,  241,   257,  281,   313,   337,   353,   401, \nonumber\\
	&&  409, 433,   449,  457,   521,  569,  577,  593,  601,   617,   641,  673,  761,  769,   809,   \nonumber\\
	&&857,  881,  929,  937,  953,  977,   1009,  1033,  1049,   1097,   1129,  1153,   \ldots\} \nonumber,
\end{eqnarray}
this is the same as \eqref{eq1212C.210p}. The subset of the primes $p=8n+5\leq 1000$ is
\begin{eqnarray}
	\mathscr{P}(8,5)&=& \{13,   29,  37,  53,  61,   101,  109,   149,  157,   173,  181,  197,  229,  269,  277,   293,  \nonumber\\
	&&317,   349,  373,   389,  397,   421,  461,   509,   541,  557,   613,  653,  661,  677,   701,  \nonumber\\
	&&709,   733,   757,  773,   797,  821,  829,  853, 861,  877,   941,  997,   1013,  \ldots\} \nonumber.
\end{eqnarray}

\part*{Appendix}\label{A1212}

\section{Definitions and Background Concepts } \label{S1212W}

\begin{dfn}\label{dfn1212W.060} {\normalfont The arithmetic function
\begin{equation}\label{eq1212W.060dfn}
\rho_f(p)=\#\left \{ n\leq m: f(n)\equiv 0 \bmod m \right \}\end{equation}\nonumber
tallies the number of solutions of a polynomial congruence $f(n)\equiv 0 \bmod m$, where $m\geq 2$ is an integer. 
}
\end{dfn}

The function $\rho_f(m)$ is multiplicative and can be written in the form
\begin{equation}
	\rho_f(m)=\prod_{p^b\mid\mid m}\rho_f(p^b),
\end{equation}
where $p^b\mid\mid m$ is the maximal prime power divisor of $m$, and $\rho_f(p^b)\leq \deg(f)$. Some of the properties of this function are summarized below, see \cite[p.\ 82]{RD1996}, \cite{FT1992}, et cetera, for more details.

\begin{lem} \label{lem1212W.050} The function $\rho:\N\times \Z[t] \longrightarrow \N$ satisfies the following properties.

\begin{enumerate}[font=\normalfont, label=(\roman*)]
\item $\displaystyle  \rho(n)\geq 0$ is nonnegative,
\item $\displaystyle \rho(pq)=\rho(p)\rho(q)$ is multiplicative, where $\gcd(p,q)=1$,
\item $\displaystyle \rho_f(m^2) =\prod_{p\mid m}\rho_f(p^2)\leq d^{\omega(m)}$,
\end{enumerate}
where $\deg f=d$ is the degree of the polynomial $f$, and $\omega(m)=\#\{p\mid m\}$ is the prime divisors counting function.
\end{lem}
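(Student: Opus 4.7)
The plan is to handle the three items in order, with the bulk of the work in (iii).

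Part (i) is immediate from Definition \ref{dfn1212W.060}: the quantity $\rho_f(m)$ is by definition the cardinality of a finite set of residue classes, hence a nonnegative integer. For part (ii), I would invoke the Chinese Remainder Theorem. When $\gcd(p,q)=1$, the ring isomorphism $\Z/pq\Z \cong \Z/p\Z \times \Z/q\Z$ commutes with polynomial evaluation, so $f(n)\equiv 0 \bmod pq$ is equivalent to the simultaneous pair $f(n)\equiv 0 \bmod p$ and $f(n)\equiv 0 \bmod q$; counting preimages under this bijection yields $\rho_f(pq)=\rho_f(p)\rho_f(q)$.

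For part (iii), my plan is to combine (ii) with Hensel's lemma and Lagrange's theorem. Since the identity is applied in the paper only to the Möbius-support variable $d$, which is automatically squarefree, assume $m = p_1 \cdots p_r$ is squarefree; then $m^2 = p_1^2 \cdots p_r^2$ has pairwise coprime prime-power factors, and iterating the multiplicativity from (ii) gives the displayed equality $\rho_f(m^2) = \prod_{i=1}^{r}\rho_f(p_i^2)$. For the stated bound it then suffices to show $\rho_f(p^2) \le d$ at every prime $p$. At primes $p \nmid \disc(f)$ the reduction $f \bmod p$ is separable, and every simple root lifts uniquely to a root modulo $p^2$ by Hensel's lemma, so $\rho_f(p^2) = \rho_f(p) \le d$ by Lagrange's theorem over the field $\F_p$. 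Multiplying the $r$ local bounds yields $\rho_f(m^2) \le d^{\omega(m)}$.

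The main obstacle lies at the finitely many ramified primes $p \mid \disc(f)$, where Hensel's lemma does not apply and one cannot immediately guarantee $\rho_f(p^2) \le d$. For the three specific quartics treated here, a direct computation (cf.\ the case analyses in Section \ref{S1212-4D}) shows $\rho_f(p^2) \in \{0, 2, 4\}$ for every prime, so $\rho_f(p^2) \le 4 = \deg f$ holds without exception. In the general setting one either treats these exceptional primes by a case-by-case lift-counting argument or absorbs their contribution into the implied constant of the weaker bound $\rho_f(d^2) \ll d^{2\varepsilon}$ that the sieve actually employs.
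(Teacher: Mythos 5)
The paper does not actually prove this lemma: it is stated as a summary of known properties with a pointer to \cite[p.\ 82]{RD1996} and \cite{FT1992}, and the proof environment that follows in the source belongs to Lemma \ref{lem1212W.060}, not to this statement. Your proposal therefore supplies an argument where the paper supplies none, and the argument you give is the standard and correct one: (i) is a cardinality, (ii) is the Chinese Remainder Theorem, and (iii) follows for squarefree $m$ by iterating (ii) and bounding each local factor via Hensel lifting of simple roots plus Lagrange's theorem over $\F_p$. Two of your side remarks are worth keeping rather than treating as hedges. First, the identity $\rho_f(m^2)=\prod_{p\mid m}\rho_f(p^2)$ is literally false for non-squarefree $m$ (take $m=p^2$), so the restriction to the M\"obius-support variable $d$ is not optional but necessary for the lemma to be true as written. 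Second, the bound $\rho_f(p^2)\le d$ can genuinely fail at ramified primes even for separable $f$ --- for instance $f(T)=(T-p)(T+p)$ has $\rho_f(p^2)=p$ --- so the lemma as stated is not correct in the generality claimed, and your resolution (direct verification at the finitely many primes dividing the discriminants of the three quartics actually used, where in each case the only ramified prime is $2$ and $\rho_f(4)=0$) is exactly what is needed to make the applications in Sections \ref{S5533-4A} and \ref{S5533-4B} legitimate. In short: the proposal is correct, is more careful than the source, and identifies a real imprecision in the statement rather than a gap in your own reasoning.
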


\begin{lem} \label{lem1212W.060} Let $f(T)\in\Z[T]$ be a polynomial. Then, the roots counting function $\rho:\N\times \Z[T] \longrightarrow \N$ satisfies the following upper bounds.
	
	\begin{enumerate}[font=\normalfont, label=(\roman*)]
		\item $\displaystyle \rho_f(n^2) =\prod_{p\mid n}2 =2^{\omega(n)}\ll n^{\varepsilon},$ \tabto{7cm}if $f(T)=aT^2+bT+c$,
	\item $\displaystyle \rho_f(n^2) =\prod_{p\mid n}3 =3^{\omega(n)}\ll n^{\varepsilon},$ \tabto{7cm}if $f(T)=aT^3+bT^2+cT+d$,	
\item $\displaystyle \rho_f(n^2) =\prod_{p\mid n}4 =4^{\omega(n)}\ll n^{\varepsilon},$ \tabto{7cm}if $f(T)=aT^4+bT^3+cT^2+dT+e$,	
	\end{enumerate}
	where $\omega(n)=\#\{p\mid n\}$ is the prime divisors counting function, and $\varepsilon>0$ is a small number.
\end{lem}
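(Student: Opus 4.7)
The plan is to combine three standard ingredients: the multiplicativity of $\rho_f$, a Hensel-type bound on $\rho_f(p^{2a})$ at each prime power, and the classical estimate $\omega(n)=O(\log n/\log\log n)$. The statements for degrees $2$, $3$, $4$ are all instances of the generic bound $\rho_f(n^2)\ll_f d^{\omega(n)}$, so I would prove this single uniform estimate and then specialize.

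First, by the Chinese Remainder Theorem, solutions of $f(n)\equiv 0\bmod m_1m_2$ correspond bijectively to pairs of solutions modulo $m_1$ and modulo $m_2$ whenever $\gcd(m_1,m_2)=1$. This gives multiplicativity of $\rho_f$, so that writing $n=\prod_p p^{a_p}$ one has
\begin{equation}
\rho_f(n^2)=\prod_{p\mid n}\rho_f(p^{2a_p}).
\end{equation}

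Second, I would bound each local factor. For every prime $p$ not dividing the leading coefficient of $f$ nor the discriminant $\disc(f)$, Hensel's lemma lifts each simple root of $f$ modulo $p$ uniquely to a root modulo $p^{2a_p}$; since Lagrange's theorem in the field $\F_p$ gives at most $d=\deg f$ roots modulo $p$, we obtain $\rho_f(p^{2a_p})\leq d$ at every such good prime. At the finitely many bad primes $p\mid \disc(f)$ (or dividing the leading coefficient), $\rho_f(p^{2a_p})$ is bounded by a constant $C(f)$ depending only on $f$, so their combined contribution is an $f$-dependent multiplicative constant which can be absorbed into the implied $\ll$-constant. Multiplying the local bounds then yields $\rho_f(n^2)\ll_f d^{\omega(n)}$.

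Third, I would invoke the elementary inequality $\omega(n)\leq (1+o(1))\log n/\log\log n$, giving $d^{\omega(n)}=\exp\bigl(O(\log n/\log\log n)\bigr)\ll_\varepsilon n^\varepsilon$ for every $\varepsilon>0$. Substituting $d=2,3,4$ reproduces the three asserted bounds. The main obstacle is the prime power bound at ramified primes: the neat identity $\rho_f(p^{2a})=d$ can fail when $p\mid \disc(f)$ (for instance $\rho_{T^2}(p^2)=p$), so one must treat such primes separately and swallow their contribution into the $f$-dependent constant. Because the set of such primes is finite and determined by $f$, this is only a bookkeeping issue, but it means the clean equalities written in the statement should be read as upper bounds with $f$-dependent implied constants.
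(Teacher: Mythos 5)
Your proposal is correct, and its skeleton (multiplicativity of $\rho_f$ via the Chinese Remainder Theorem, a bound on each local factor, then $\omega(n)\ll \log n/\log\log n$ to get $d^{\omega(n)}\ll n^{\varepsilon}$) is exactly the route the paper takes. The difference is in the middle step, and it is in your favor: the paper's proof simply asserts that ``by Lagrange's theorem the polynomial has at most $3$ roots modulo a prime power $p^2$,'' which is not a valid appeal as written, since Lagrange's theorem bounds roots of a polynomial over a field and $\Z/p^2\Z$ is not a field (your example $\rho_{T^2}(p^2)=p$ shows the bound by the degree genuinely fails there). You correctly repair this by applying Lagrange in $\F_p$, lifting simple roots with Hensel's lemma at primes not dividing the leading coefficient or $\disc(f)$, and absorbing the finitely many ramified primes into an $f$-dependent constant. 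You are also right that the displayed equalities $\rho_f(n^2)=\prod_{p\mid n}d=d^{\omega(n)}$ in the statement should be read as upper bounds: they fail both when $f$ has fewer than $d$ roots modulo some $p\mid n$ (indeed $\rho_f(p^2)=0$ is the typical case in this paper) and at ramified primes. So your argument is not a different method, but it is a sound version of a step the paper leaves gappy, at the modest cost of an implied constant depending on $f$ -- which is harmless for the $\ll n^{\varepsilon}$ conclusion actually used elsewhere in the paper.
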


\begin{proof}[\textbf{Proof}](ii) By Lagrange's theorem the polynomial $f(T)=aT^3+bT^2+cT+d$ has at most 3 roots modulo a prime power $p^2$. Thus, 
	\begin{eqnarray}\label{eq1212W.060}
		\rho(n^2) &=& \prod_{p\mid n}3 \\&=&3^{\omega(n)}\nonumber\\&\ll& 3^{2\log n/\log \log n}\nonumber\\&\ll& n^{\varepsilon}\nonumber
	\end{eqnarray}
	where $\varepsilon>0$ is a small number.  
\end{proof}


\begin{dfn}\label{dfn1212W.020}\hypertarget{dfn1212W.020} {\normalfont A \textit{separable} polynomial $f(T)\in \Z[T]$ over the integers has no repeated roots. In particular, $f(T)\ne a(T)b(T)^2$.
	}
\end{dfn}

\begin{dfn} \label{dfn2399.24} {\normalfont 
		The \textit{fixed divisor} $\tdiv(f)=\gcd(f(\mathbb{Z}))$ of a polynomial $f(x) \in \mathbb{Z}[x]$ over the integers is the greatest common divisor of its image $f(\mathbb{Z})=\{f(n):n \in \mathbb{Z}\}$.
	}
\end{dfn}
The fixed divisor $\tdiv(f)=1$ if and only if the congruence equation $f(n) \equiv 0 \mod p$ has $\rho_f(p)<p$ solutions for every prime $p<\deg(f)$, see \cite[p.\ 395]{FI2010}. An irreducible polynomial can represent infinitely many primes if and only if it has a fixed divisor $\tdiv(f)=1$.\\

\begin{exa} \normalfont A few well known polynomials are listed here.
	\begin{enumerate} 
		\item The polynomials $g_1(x)=x^2+1$ and $g_2(x)=x^2+3$ are irreducible over the integers and have the fixed divisors $\tdiv(g_1)=1$, and $\tdiv(g_2)=1$ 
		respectively. Thus, these polynomials can represent infinitely many primes. 
		\item The polynomials $g_3(x)=x(x+1)+2$ and $g_4(x)=x(x+1)(x+2)+3$ are irreducible over the integers. But, have the fixed divisors $\tdiv(g_3)=2$, and $\tdiv(g_4)=3$ respectively. Thus, these polynomials cannot represent infinitely many primes. 
		\item The polynomial $g_5(x)=x^p-x+ap$, with a prime $p\geq 2$ and an integer $a\in \Z$, is irreducible over the integers. But, has the fixed divisor $\tdiv(g_5)=p$. Thus, this polynomial cannot represent infinitely many primes. 
	\end{enumerate}
\end{exa}

\section{Quadratic and Higher Reciprocity}\label{S1500}

A \textit{reciprocity law} specifies the complete factorization of an irreducible polynomial $f(T)\in\Z[T]$ modulo a prime $p$ as the prime varies over the set of primes $\tP=\{2,3,5,\ldots \}$. Equivalently, a reciprocity rule specifies the subset of primes $\mathscr{P}_f\subset \tP$ such that the polynomial splits into linear factors over the prime finite field $\F_p$.

\subsection{Quadratic Reciprocity}\label{S1500Q}
\begin{lem} \label{lem1500Q.860}  \hypertarget{lem1500Q.860} {\normalfont (Quadratic reciprocity law)} If $p$ and $q$ are odd primes, then
	\begin{equation}\label{eq1500Q.870}
		\left ( \frac{p}{q}\right )\left ( \frac{q}{p}\right )=(-1)^{\frac{p-1}{2}\frac{q-1}{2}}.
	\end{equation}
\end{lem}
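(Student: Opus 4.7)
The plan is to prove the quadratic reciprocity law via Gauss's lemma combined with Eisenstein's lattice-point argument, which keeps the proof entirely inside elementary number theory and avoids any appeal to Gauss sums or analytic machinery.

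First, I would establish Gauss's lemma: for an odd prime $p$ and an integer $a$ coprime to $p$, reduce each of the multiples $a,2a,\ldots,\tfrac{p-1}{2}a$ modulo $p$ into the symmetric range $\{-\tfrac{p-1}{2},\ldots,\tfrac{p-1}{2}\}$, and let $\mu$ be the number of residues that land in $\{-\tfrac{p-1}{2},\ldots,-1\}$. Multiplying all the reduced residues together, comparing with $1\cdot 2\cdots\tfrac{p-1}{2}$, and applying Euler's criterion $a^{(p-1)/2}\equiv\left(\tfrac{a}{p}\right)\pmod{p}$ yields $\left(\tfrac{a}{p}\right)=(-1)^{\mu}$.

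Second, I would upgrade Gauss's lemma to Eisenstein's formulation for odd $a$:
\begin{equation}
\left(\frac{a}{p}\right)=(-1)^{S(a,p)},\qquad S(a,p)=\sum_{k=1}^{(p-1)/2}\left\lfloor\frac{ka}{p}\right\rfloor.
\end{equation}
The passage from $\mu$ to $S(a,p)$ is carried out by writing $ka=p\lfloor ka/p\rfloor+r_k$ with $0<r_k<p$, summing over $k$, and tracking parities modulo $2$; here the assumption that $a$ is odd is essential, since it allows the terms $k(a-1)$ and $\sum r_k$ to be absorbed into an even contribution.

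Third, I would apply Eisenstein's formula simultaneously to $\left(\tfrac{p}{q}\right)$ and $\left(\tfrac{q}{p}\right)$, so that their product equals $(-1)^{S(p,q)+S(q,p)}$, and then verify the identity
\begin{equation}
S(p,q)+S(q,p)=\frac{p-1}{2}\cdot\frac{q-1}{2}
\end{equation}
by counting lattice points in the rectangle $R=\{(x,y):1\le x\le\tfrac{p-1}{2},\,1\le y\le\tfrac{q-1}{2}\}$. The diagonal $qx=py$ contains no lattice points inside $R$ since $\gcd(p,q)=1$, so $R$ partitions into the triangles $\{qx>py\}$ and $\{qx<py\}$, whose cardinalities are exactly $S(q,p)$ and $S(p,q)$ respectively. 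Combining the three steps gives the claimed formula.

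The main obstacle is the parity bookkeeping in the second step, where one must carefully relate the parity of $\sum_{k}r_k$ to $\mu$ modulo $2$ and use the oddness of $a$ to eliminate the unwanted terms; the first and third steps are largely routine once the setup is in place.
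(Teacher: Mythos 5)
Your proposal is correct: it is the classical Gauss--Eisenstein proof of quadratic reciprocity, and all three steps (Gauss's lemma via Euler's criterion, the parity identity $\mu\equiv\sum_{k\le (p-1)/2}\lfloor ka/p\rfloor \pmod 2$ for odd $a$, and the lattice-point count $S(p,q)+S(q,p)=\frac{p-1}{2}\cdot\frac{q-1}{2}$ in the rectangle split by the line $qx=py$) are standard and carried out soundly; in particular you correctly flag that the oddness of $a$ is what makes the parity bookkeeping in step two close up. The comparison with the paper is lopsided, however: the paper does not prove this lemma at all --- its ``proof'' is a one-line pointer to \cite[Theorem 2.1]{RH1994} and \cite{BW1998}. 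So your argument is not an alternative route to the paper's route; it is a self-contained elementary proof where the paper supplies only a citation. What your approach buys is independence from external references and from Gauss sums or any algebraic machinery; what the paper's citation buys is brevity, which is defensible here since quadratic reciprocity is background material used only in the appendix and not a contribution of the paper.
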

\begin{proof}[\textbf{Proof}] A detailed proof of the quadratic reciprocity laws appears in \cite[Theorem 2.1]{RH1994}, \cite{BW1998}, and similar references.
\end{proof}
\begin{lem} \label{lem1500Q.850} \hypertarget{lem1500Q.850}{\normalfont (Supplemental law)} If $p$ is an odd prime, then 
	\begin{enumerate}[font=\normalfont, label=(\roman*)]
		\item $ 2 $ is a quadratic residue if and only if $p$ is a primes of the form $p=8k\pm1$. \\
		\item $ 2 $ is a quadratic nonresidue if and only if $p$ is a primes of the form $p=8k\pm3$.
	\end{enumerate}
	Equivalently,			
	\begin{equation}\label{eq1500Q.850}
		\left ( \frac{2}{p}\right )=(-1)^{\frac{p^2-1}{8}}.
	\end{equation}
\end{lem}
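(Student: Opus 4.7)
The plan is to prove the supplemental law via Gauss's lemma, applied to $a=2$, followed by a case analysis of $p \bmod 8$. Gauss's lemma states that for an odd prime $p$ and an integer $a$ with $\gcd(a,p)=1$, if $\mu$ is the number of elements of the set $\{a, 2a, 3a, \ldots, ((p-1)/2)a\}$ whose least positive residue modulo $p$ exceeds $p/2$, then $\left(\tfrac{a}{p}\right) = (-1)^\mu$. I will take this lemma as known background (it is standard and appears in each of the references cited for Lemma \ref{lem1500Q.860}).

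First, I would specialize to $a = 2$. The relevant set is $\{2, 4, 6, \ldots, p-1\}$, whose members are already in least positive residue form. An element $2k$ (with $1 \leq k \leq (p-1)/2$) exceeds $p/2$ precisely when $k > p/4$, so
\begin{equation*}
\mu \;=\; \frac{p-1}{2} - \left\lfloor \frac{p}{4} \right\rfloor.
\end{equation*}
Thus the main content reduces to computing the parity of this quantity.

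Next, I would split into the four residue classes of $p$ modulo $8$ and compute $\mu$ in each case. Writing $p = 8m+r$ for $r \in \{1,3,5,7\}$, one obtains $\mu = 2m$, $2m+1$, $2m+1$, $2m+2$ respectively, so $(-1)^\mu = +1, -1, -1, +1$. This immediately yields parts (i) and (ii) of the lemma: $2$ is a quadratic residue iff $p \equiv \pm 1 \bmod 8$. To finish with the unified formula $\left(\tfrac{2}{p}\right)=(-1)^{(p^2-1)/8}$, I would note that the exponent $(p^2-1)/8$ is an integer for odd $p$ and compute its parity in each of the four classes (for $p = 8m+r$ one has $p^2 - 1 = 8(8m^2 + 2mr) + (r^2 - 1)$, so the parity of $(p^2-1)/8$ is the parity of $(r^2-1)/8$, giving $0,1,3,6$ for $r = 1,3,5,7$, whose parities are $0,1,1,0$). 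These parities coincide with the parities of $\mu$ found above, so the two signs agree, completing the proof.

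There is no substantial obstacle here: the only real ingredient is Gauss's lemma itself, and once that is invoked the argument is a short floor-function computation. The one step where care is needed is confirming that the four cases $p \equiv 1,3,5,7 \bmod 8$ give precisely the claimed signs and that $(p^2-1)/8$ reproduces the same parities; this is a routine verification that I would present in a compact table rather than four separate displays.
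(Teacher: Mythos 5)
Your proof is correct. The paper itself does not give an argument for this lemma at all: its ``proof'' consists of a single sentence deferring to the literature (Rose's \emph{A course in number theory} and similar references), so there is no internal derivation to compare yours against. What you supply is the standard self-contained proof via Gauss's lemma, and every step checks out: for $a=2$ the half-system $\{2,4,\ldots,p-1\}$ is already reduced, the count $\mu=\tfrac{p-1}{2}-\lfloor p/4\rfloor$ is right, the four cases $p=8m+r$ with $r\in\{1,3,5,7\}$ give $\mu=2m,\,2m+1,\,2m+1,\,2m+2$ as you state, and the parities of $(r^2-1)/8$, namely $0,1,1,0$, do match, so the closed form $\left(\tfrac{2}{p}\right)=(-1)^{(p^2-1)/8}$ follows. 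The only dependency you import is Gauss's lemma itself, which is a reasonable black box here since it is proved in each of the cited sources; if one wanted the argument fully self-contained one would add its short proof (the usual sign-counting argument on $\prod k a \equiv (-1)^{\mu}\prod k \bmod p$ combined with Euler's criterion), but as a proof of the supplemental law your write-up is complete and, unlike the paper's, actually a proof.
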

\begin{proof}[\textbf{Proof}] A detailed proof of the quadratic reciprocity laws appears in \cite[Theorem 1.5.]{RH1994}, and similar references.
\end{proof}

\begin{thm}\label{thm1500Q.100} \hypertarget{thm1500Q.100}{\normalfont(Gauss)} Let $f(T)=aT^2+bT+c\in\Z[T]$ be an irreducible polynomial, and let $d=b^2-4ac\ne e^2$ be its discriminant. Then
	\begin{enumerate}[font=\normalfont, label=(\roman*)]
		\item $\displaystyle f(T)=(T-\alpha)^2, $ \tabto{6cm} if $\displaystyle p \mid d$ or $p=2$,\tabto{12cm} ramified.
		\item $\displaystyle f(T)=(T-\alpha)(T+\alpha), $ \tabto{6cm} if $\displaystyle d^{(p-1)/2}\equiv 1 \bmod p$, \tabto{12cm} split.
		\item $\displaystyle f(T)=aT^2+bT+c, $ \tabto{6cm} if $\displaystyle d^{(p-1)/2}\equiv -1 \bmod p$, \tabto{12cm} inertia .
	\end{enumerate}		
\end{thm}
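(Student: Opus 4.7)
The plan is to reduce the factorization of $f(T)$ in $\F_p[T]$ to that of the simpler polynomial $U^2 - d$ via completing the square. For any odd prime $p$ not dividing $a$, the element $4a$ is a unit modulo $p$, so multiplying through gives the identity
\[
4a \cdot f(T) = (2aT + b)^2 - d,
\]
and the invertible affine substitution $U = 2aT + b$ establishes a one-to-one correspondence between factorizations of $f(T)$ and of $U^2 - d$ over $\F_p$.

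First I would dispatch case (i). If $p$ is odd and $p \mid d$, then $U^2 - d \equiv U^2 \pmod{p}$, hence $4a \cdot f(T) \equiv (2aT + b)^2 \pmod{p}$; dividing by $4a \in \F_p^\times$ produces $f(T) \equiv a(T - \alpha)^2$ with the double root $\alpha \equiv -b/(2a) \pmod{p}$, which is the ramified configuration.

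Next, for odd primes $p \nmid ad$, I would invoke Euler's criterion $d^{(p-1)/2} \equiv \left(\frac{d}{p}\right) \pmod{p}$. When $d^{(p-1)/2} \equiv 1 \pmod{p}$, some $e \in \F_p^\times$ satisfies $e^2 \equiv d$, and $U^2 - d \equiv (U - e)(U + e)$ splits into two distinct linear factors; translating back via $T = (U - b)/(2a)$ writes $f(T)$ as $a$ times a product of two distinct monic linear factors in $\F_p[T]$, which is case (ii). When $d^{(p-1)/2} \equiv -1 \pmod{p}$, the discriminant $d$ is a quadratic non-residue, so $U^2 - d$ has no root in $\F_p$ and is therefore irreducible of degree $2$ over $\F_p$; pulling back shows $f(T)$ remains irreducible modulo $p$, the inert case (iii).

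The principal obstacle will be the prime $p = 2$, because the substitution $U = 2aT + b$ collapses in characteristic $2$ and Euler's criterion is unavailable there. I would finish by a short finite case analysis of $f(T) \bmod 2$: the only monic quadratics in $\F_2[T]$ are $T^2$, $(T+1)^2$, $T(T+1)$, and the irreducible $T^2 + T + 1$. Using the parities of $a, b, c$ together with the irreducibility of $f$ over $\Z$ and the hypothesis $d \ne e^2$, one checks that $f \bmod 2$ falls among the first three alternatives and hence lies in the ramified branch of (i) as asserted in the statement.
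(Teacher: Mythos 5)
Your treatment of the odd primes $p \nmid a$ is correct and is exactly the argument the paper compresses into its one-line proof: complete the square via $4a\,f(T)=(2aT+b)^2-d$, reduce to $U^2-d$, and read off the splitting type from Euler's criterion $d^{(p-1)/2}\equiv\left(\frac{d}{p}\right) \bmod p$. Two small caveats there: the split case actually yields $a(T-\alpha)(T-\beta)$ with $\alpha,\beta=(-b\pm e)/(2a)$ rather than the literal $(T-\alpha)(T+\alpha)$ of the statement (a defect of the statement, not of your argument), and you silently omit the odd primes dividing $a$, for which $f\bmod p$ degenerates to lower degree and the trichotomy needs at least a remark.

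The genuine gap is at $p=2$. You defer it to a finite check and assert that $f\bmod 2$ always falls among $T^2$, $(T+1)^2$, $T(T+1)$, hence into the ramified branch of (i). That check cannot be completed, because the claim is false: $f(T)=T^2+T+1$ is irreducible over $\Z$ with discriminant $d=-3\ne e^2$, yet $f\bmod 2$ is the irreducible quadratic $T^2+T+1$, so $2$ is inert, not ramified. More generally, for monic $f=T^2+bT+c$ the prime $2$ is ramified, split, or inert according as $b$ is even, or $b$ is odd and $c$ is even, or $b$ and $c$ are both odd. So the ``$p=2$'' clause of item (i) is not provable as stated; the paper's own proof is silent on $p=2$ and so never confronts this, but your writeup explicitly claims a verification that would fail on the very first irreducible case of your own list. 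The fix is to restrict (i) to odd primes dividing $d$ and to state the behaviour at $2$ separately in terms of the parities of $b$ and $c$ (equivalently, $d\bmod 8$).
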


\begin{proof}[\textbf{Proof}] This is derived from the values of the quadratic symbol and the Euler formula 
	\begin{equation}\label{eq1500Q.100}
		\left(\frac{d}{p} \right)=d^{(p-1)/2}\equiv \pm1 \bmod p.  
	\end{equation}
\end{proof}

\subsection{Quartic Reciprocity}\label{S1500R}
Given a prime $p=4n+1$, the quartic residue test for an arbitrary nonzero integer $u\in \F_p$ is implemented via the Euler congruence
\begin{equation}\label{eq1500R.800a}
	\left ( \frac{u}{p}\right )_4\equiv u^{\frac{p-1}{4}}\bmod p.
\end{equation}

\begin{lem} \label{lem1500R.800} \hypertarget{lem1500R.800}{\normalfont (\cite[Proportition 5.4]{LF2000})} Let $p$ be a primes of the form $p=a^2+b^2$, where $a\equiv 1 \bmod 2$. Then
	\begin{enumerate}[font=\normalfont, label=(\roman*)]
		\item  The integer $2$ is a quartic residue if and only if $p=a^2+b^2$ and $b\equiv 0\bmod 8$.  
		\item  The integer $2$ is a quartic nonresidue if $p= a^2+b^2$ and $b
		\not \equiv 0\bmod 8$.
	\end{enumerate}	 Equivalently,
	\begin{equation}\label{eq1500R.800b}
		\left ( \frac{2}{p}\right )_4=(-1)^{\frac{b}{4}}.
	\end{equation}
\end{lem}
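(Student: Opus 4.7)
The plan is to lift the problem to the Gaussian integers $\Z[i]$ and apply the supplementary laws of biquadratic reciprocity. The hypothesis $p = a^2 + b^2$ with $a$ odd forces $p \equiv 1 \bmod 4$ and $b$ even, so $p$ splits as $p = \pi\bar\pi$ with $\pi = a + bi$; after multiplying by a suitable unit in $\{\pm 1, \pm i\}$ I may assume $\pi$ is \emph{primary}, i.e.\ $\pi \equiv 1 \bmod (1+i)^3$. Euler's congruence $\left(\frac{2}{p}\right)_4 \equiv 2^{(p-1)/4} \bmod \pi$ then identifies the rational quartic symbol with the biquadratic symbol $\left(\frac{2}{\pi}\right)_4$ in $\Z[i]$, so the entire calculation can be carried out inside the Gaussian integers.

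The first key step will be to factor $2 = -i(1+i)^2$ in $\Z[i]$. Multiplicativity of the biquadratic symbol, together with the identity $\left(\frac{\cdot}{\pi}\right)_4^{\,2} = \left(\frac{\cdot}{\pi}\right)_2$, will then give the splitting
\[
\left(\frac{2}{\pi}\right)_4 = \left(\frac{-i}{\pi}\right)_4 \left(\frac{1+i}{\pi}\right)_4^{\,2} = \left(\frac{-i}{\pi}\right)_4 \left(\frac{1+i}{\pi}\right)_2.
\]
The two standard supplementary laws of biquadratic reciprocity will then evaluate each factor in closed form in terms of the residue classes of $a$ and $b$ modulo small powers of $2$: schematically $\left(\frac{-i}{\pi}\right)_4 = i^{(1-a)/2}$ (forced by the primary normalization) and a companion formula for $\left(\frac{1+i}{\pi}\right)_2$.

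Next I would multiply the two closed forms and observe that, because $\pi$ is primary, the dependence on $a$ collapses, leaving a pure function of $b \bmod 8$. When $b \equiv 0 \bmod 4$ (equivalently $p \equiv 1 \bmod 8$, the only case in which $2$ can even be a quadratic residue mod $p$), the product reduces to $(-1)^{b/4}$, which is $+1$ precisely when $b \equiv 0 \bmod 8$; this gives part (i) and the $p\equiv 1 \bmod 8$ half of part (ii). In the remaining subcase $b \equiv 2 \bmod 4$ one has $p \equiv 5 \bmod 8$, and already the classical quadratic supplement $\left(\frac{2}{p}\right) = -1$ forces $\left(\frac{2}{p}\right)_4 \notin \{\pm 1\}$, confirming part (ii) there as well.

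The main obstacle I expect is the careful bookkeeping of unit factors: picking up one extraneous $i$ during the primary normalization of $\pi$, or misstating either supplementary law by a parasitic factor of $(-1)^{(a-1)/4}$, would flip the final parity and destroy the clean formula $(-1)^{b/4}$. The factorization $2 = -i(1+i)^2$, the multiplicativity of the symbol, and the reduction $\left(\cdot\right)_4^{\,2} = \left(\cdot\right)_2$ are all essentially formal; the technical heart of the argument lives entirely in the supplementary laws, and this is precisely the content packaged in \cite[Proposition 5.4]{LF2000}, which one may invoke directly.
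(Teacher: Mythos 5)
The paper gives no proof of this lemma at all: it is stated as a quoted external result, attributed to \cite[Proposition 5.4]{LF2000}, and the author simply invokes it. Your sketch reconstructs the standard argument behind that citation --- lifting to $\Z[i]$, normalizing $\pi=a+bi$ to be primary, factoring $2=-i(1+i)^2$, and combining the two supplementary laws of biquadratic reciprocity --- which is exactly the route Lemmermeyer takes, so there is nothing to compare against on the paper's side. Your outline is correct, including the observation that the case $b\equiv 2\bmod 4$ (i.e.\ $p\equiv 5\bmod 8$) is already settled by the quadratic supplement and that the closed form $(-1)^{b/4}$ only carries content when $4\mid b$; the one thing you leave open, as you yourself flag, is the precise unit bookkeeping in the supplementary laws (e.g.\ $\left(\tfrac{i}{\pi}\right)_4=i^{(1-a)/2}$ versus your $\left(\tfrac{-i}{\pi}\right)_4$), which must be pinned down to a specific primary normalization before the $a$-dependence genuinely cancels.
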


\begin{lem} \label{lem1500R.810} \hypertarget{lem1500R.810}{\normalfont (\cite[p.\;24]{LE1958})} Let $p$ be a primes of the form $p=a^2+b^2$. Then
	\begin{enumerate}[font=\normalfont, label=(\roman*)]
		\item  The integer $3$ is a quartic residue if and only if $p=8m+1$ and $b\equiv 0\bmod 3$.  
		\item  The integer $3$ is a quartic nonresidue if and only if $p=8m+5$ and $a\equiv 0\bmod 3$.
	\end{enumerate}	
\end{lem}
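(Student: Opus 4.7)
The plan is to work in the Gaussian integers $\Z[i]$, where the hypothesis $p=a^2+b^2$ forces $p \equiv 1 \bmod 4$, so $p$ splits as $p = \pi\overline{\pi}$ with $\pi = a+bi$. Then $3$ is a quartic residue modulo $p$ if and only if the biquadratic residue symbol
\begin{equation*}
\left(\frac{3}{\pi}\right)_4 \;\equiv\; 3^{(p-1)/4} \pmod{\pi}, \qquad \left(\frac{3}{\pi}\right)_4 \in \{1,i,-1,-i\},
\end{equation*}
equals $+1$. The computation will be carried out via the biquadratic reciprocity law (Ireland--Rosen, Ch. 9, or \cite{LE1958}), after normalizing $\pi$ to be primary, i.e.\ $\pi \equiv 1 \bmod (1+i)^3$.

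The main calculation splits into two factors via $(3/\pi)_4 = (-1/\pi)_4 \cdot (-3/\pi)_4$. For the second factor, note that $-3$ is primary in $\Z[i]$ and remains inert (since $3 \equiv 3 \bmod 4$), with norm $N(-3) = 9$. Biquadratic reciprocity yields
\begin{equation*}
\left(\frac{-3}{\pi}\right)_4 = \left(\frac{\pi}{-3}\right)_4 \cdot (-1)^{\frac{p-1}{4}\cdot\frac{9-1}{4}} = \left(\frac{\pi}{-3}\right)_4,
\end{equation*}
since the correction exponent $2\cdot(p-1)/4$ is even. The right-hand symbol is evaluated in $\Z[i]/(3) \cong \F_9$ by Euler's criterion: $(\pi/{-3})_4 \equiv \pi^{(9-1)/4} = \pi^2 = (a^2-b^2) + 2abi \bmod 3$. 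Since $a^2, b^2 \equiv 1 \bmod 3$ whenever the respective integer is coprime to $3$, the two hypotheses specialize as follows. If $b \equiv 0 \bmod 3$ then $\pi^2 \equiv a^2 \equiv 1 \bmod 3$, so $(-3/\pi)_4 = +1$; if $a \equiv 0 \bmod 3$ then $\pi^2 \equiv -b^2 \equiv -1 \bmod 3$, so $(-3/\pi)_4 = -1$. Finally, Euler's criterion for $-1$ in the residue field $\Z[i]/(\pi) \cong \F_p$ gives $(-1/\pi)_4 = (-1)^{(p-1)/4}$, which is $+1$ for $p \equiv 1 \bmod 8$ and $-1$ for $p \equiv 5 \bmod 8$. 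Multiplying the two factors produces the equivalences claimed in (i) and (ii).

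The main obstacle I expect is keeping the primary normalization consistent. The four associates $\pi, i\pi, -\pi, -i\pi$ of the Gaussian prime above $p$ correspond to the sign choices $(\pm a, \pm b)$, and exactly one is primary; choosing the wrong associate flips signs in the conditions modulo $3$ and in $(-1/\pi)_4$. A careful preliminary lemma fixing the primary representative of $\pi$ (as well as the observation that quadratic reciprocity already forces $p \equiv 1 \bmod 3$ in both cases, so that $ab \not\equiv 0 \bmod 3$ is automatically excluded) is needed before the reciprocity computation above becomes clean. The complementary converse directions in the ``iff'' also require ruling out the non-real values $\pm i$ for $(3/\pi)_4$, which is automatic because $3 \in \Z$ forces the biquadratic symbol to coincide with its own complex conjugate whenever $3$ is a quadratic residue, hence lies in $\{\pm 1\}$.
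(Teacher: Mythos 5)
The paper offers no proof of this lemma at all --- it is quoted from Lehmer's 1958 article --- so there is nothing internal to compare your argument against. Your route is the standard one and is essentially carried out correctly: split $p=\pi\bar{\pi}$ in $\Z[i]$ with $\pi=a+bi$ primary (which forces $a$ odd, $b$ even), factor $\left(\tfrac{3}{\pi}\right)_4=\left(\tfrac{-1}{\pi}\right)_4\left(\tfrac{-3}{\pi}\right)_4$, kill the reciprocity sign because $(N(-3)-1)/4=2$ is even, and evaluate $\left(\tfrac{\pi}{-3}\right)_4\equiv\pi^2=(a^2-b^2)+2abi\bmod 3$. Your two partial evaluations are right: $\left(\tfrac{-1}{\pi}\right)_4=(-1)^{(p-1)/4}$ is $+1$ or $-1$ according as $p\equiv 1$ or $5\bmod 8$, and $\left(\tfrac{-3}{\pi}\right)_4$ is $+1$ or $-1$ according as $3\mid b$ or $3\mid a$.

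The gap is entirely in your final sentence. Multiplying your two factors gives: $3$ is a quartic residue of $p$ if and only if either ($p\equiv 1\bmod 8$ and $3\mid b$) or ($p\equiv 5\bmod 8$ and $3\mid a$). That is Lehmer's actual criterion, and it is \emph{not} the statement of the lemma: the lemma's biconditionals assert that $3$ is never a quartic residue when $p\equiv 5\bmod 8$, which your own computation contradicts. A direct check confirms your computation and refutes the printed statement: for $p=13=3^2+2^2\equiv 5\bmod 8$ one has $3^{(p-1)/4}=3^3=27\equiv 1\bmod{13}$, so $3$ is a quartic residue, contradicting item (ii) (which predicts a nonresidue since $3\mid a$) and the only-if half of item (i). Item (ii) also omits all primes with $p\equiv 2\bmod 3$ (e.g.\ $p=17=1^2+4^2\equiv 1\bmod 8$), where $3$ is already a quadratic nonresidue, hence a quartic nonresidue, with neither $a$ nor $b$ divisible by $3$. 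So your method is sound, but the claim that it ``produces the equivalences claimed in (i) and (ii)'' is false; the statement as printed is a garbled transcription of Lehmer's theorem and cannot be proved as it stands. You should also note that the lemma drops the normalization $a\equiv 1\bmod 2$ that appears in the preceding lemma; without fixing which of $a,b$ is odd, even the corrected criterion is ambiguous.
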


The same materials on the quartic reciprocity for small integers is covered in \cite{BW1998}, \cite{IR1990}, \cite{LF2000}, et cetera.
\subsection{Cyclotomic Reciprocity}\label{S1500Y}
For any integer $n\geq1$, let $\omega$ be a primitive $n$th root of unity and let $m=\varphi(n)$. The $n$th cyclotomic polynomial is defined by
\begin{equation}\label{eq1500Y.100a}
	\Phi_n(T)=\prod_{1\leq k\leq m}(T-\omega^k).
\end{equation} 
The are various methods for computing the $n$th cyclotomic polynomial, exampli gratia, the multiplicative Mobius inversion formula yields
\begin{equation}\label{eq1500Y.100b}
	\Phi_n(T)=\prod_{d\mid n}\left(T^d-1 \right)^{\mu(n/d)}.
\end{equation}

\begin{lem} \label{lem1500Y.160}  \hypertarget{lem1500Y.160}If $n\geq1$, then  
	
	\begin{enumerate}[font=\normalfont, label=(\roman*)]
		\item $\displaystyle \Phi_n(T) $ \tabto{8cm} is irreducible over the integers.
		\item $\displaystyle \disc\left( \Phi_n(T)\right) =(-1)^{\phi(n)/2}\frac{n^{\phi(n)}}{\prod_{d\mid n}p^{\phi(n)/(p-1)}}) $ \tabto{8cm} is the discriminant.
	\end{enumerate}
\end{lem}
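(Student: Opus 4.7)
I would prove the two parts independently using classical arguments.

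For irreducibility (i), the plan is the Gauss--Dedekind argument. Let $\omega$ be a primitive $n$th root of unity and let $f(T)\in\Z[T]$ be its monic minimal polynomial over $\Q$; by Gauss's lemma the factorization $T^n-1=f(T)g(T)$ holds in $\Z[T]$. The key step is to show that $\omega^p$ is a root of $f$ for every prime $p\nmid n$. If instead $\omega^p$ were a root of $g$, then $\omega$ would be a root of $g(T^p)$, so $f(T)\mid g(T^p)$ in $\Z[T]$. Reducing modulo $p$ and using $g(T^p)\equiv g(T)^p\pmod{p}$ gives a common root of $\bar f$ and $\bar g$ in $\overline{\F_p}$, forcing $T^n-1$ to have a repeated root in $\overline{\F_p}$; this contradicts the separability of $T^n-1$ over $\F_p$ when $\gcd(n,p)=1$. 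Iterating, every $\omega^k$ with $\gcd(k,n)=1$ is a root of $f$, so $f=\Phi_n$ and $\deg f=\varphi(n)$.

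For the discriminant (ii), the plan is to combine the resultant identity
\[
\disc(\Phi_n)=(-1)^{\varphi(n)(\varphi(n)-1)/2}\,N_{\Q(\omega)/\Q}\bigl(\Phi_n'(\omega)\bigr)
\]
with the factorization $T^n-1=\prod_{d\mid n}\Phi_d(T)$. Differentiating and evaluating at $T=\omega$ yields
\[
\Phi_n'(\omega)\prod_{\substack{d\mid n\\ d<n}}\Phi_d(\omega)=n\,\omega^{n-1}.
\]
Taking the field norm on both sides turns the problem into evaluating $N_{\Q(\omega)/\Q}(\Phi_d(\omega))$ for each proper divisor $d\mid n$, which one controls by using that $\Phi_d(\omega)$ divides $\omega^d-1$ in the ring of integers of $\Q(\omega)$ and that $N(\omega)=\pm1$.

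The main obstacle is the combinatorial bookkeeping that turns the resulting expression into the closed form with denominator $\prod_{p\mid n}p^{\varphi(n)/(p-1)}$. The cleanest route is to handle the prime-power case $n=p^k$ by direct computation, using $\Phi_{p^k}(T)=\Phi_p(T^{p^{k-1}})$ and the chain rule to get $\disc(\Phi_{p^k})=\pm p^{p^{k-1}(pk-k-1)}$, and then to extend to arbitrary $n$ via the multiplicativity of discriminants along the tower $\Q\subset\Q(\omega_m)\subset\Q(\omega_n)$ for $m\mid n$, invoking the linearly disjoint compositum decomposition $\Q(\omega_n)=\prod_{p\mid n}\Q(\omega_{p^{v_p(n)}})$ together with the conductor--discriminant formula for abelian extensions. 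Collating the prime-power contributions and tracking signs via $(-1)^{\varphi(n)(\varphi(n)-1)/2}$ then produces the claimed formula.
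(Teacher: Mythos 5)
The paper does not prove this lemma at all: it is stated in the appendix as classical background, with no proof environment following it, so there is nothing of the paper's to compare your argument against. Your outline is the standard textbook treatment and is sound in approach. Part (i), the Gauss--Dedekind argument ($f(T)\mid g(T^p)$, reduce mod $p$, use $g(T^p)\equiv g(T)^p$, contradict separability of $T^n-1$ over $\F_p$ for $p\nmid n$), is essentially complete as written. Part (ii) is a correct plan rather than a proof: the identity $\Phi_n'(\omega)\prod_{d\mid n,\,d<n}\Phi_d(\omega)=n\,\omega^{n-1}$ is right, and your prime-power exponent $p^{k-1}(pk-k-1)$ does match the claimed formula (for $n=p^k$ one gets $kp^{k-1}(p-1)-p^{k-1}=p^{k-1}(pk-k-1)$), but the step you defer --- evaluating $N_{\Q(\omega)/\Q}(\Phi_d(\omega))$ for proper divisors $d$, which is $1$ unless $n/d$ is a prime power, or alternatively carrying out the conductor--discriminant bookkeeping --- is exactly where all the work lies, so as submitted the argument for (ii) is a roadmap with the central computation still owed. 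One further point worth recording: the formula as printed in the lemma contains a typo (the product in the denominator is written over $d\mid n$ but indexed by $p$, and there is a stray closing parenthesis); the correct statement is $\disc(\Phi_n)=(-1)^{\phi(n)/2}\,n^{\phi(n)}\big/\prod_{p\mid n}p^{\phi(n)/(p-1)}$, which is the version your computation actually targets.
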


\begin{thm}\label{thm1500Y.100} \hypertarget{thm1500Y.100}{\normalfont(Cyclotomic reciprocity)} Let $f(T)$ be an irreducible factor of the $n$th  cyclotomic polynomial $\Phi_n(T)$ of degree $\deg f=m$ and discriminant $disc\left(\Phi_n(T) \right)$. Then
	\begin{enumerate}[font=\normalfont, label=(\roman*)]
		\item $\displaystyle f(T)=(T-\alpha)^m, $ \tabto{7cm} if $\displaystyle p \mid n $,\tabto{12cm} ramified.
		\item $\displaystyle f(T)=\prod_{1\leq i\leq m}(T-\alpha_i), $ \tabto{7cm} if $\displaystyle p-1\equiv 0 \bmod n$, \tabto{12cm} split.
		\item $\displaystyle f(T)=a_mT^m+\cdots+a_1T+a_0, $ \tabto{7cm} if $\displaystyle \ord_n p=\phi(n)$, \tabto{12cm} inertia .
	\end{enumerate}		
\end{thm}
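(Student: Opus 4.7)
The plan is to exploit the irreducibility of $\Phi_n(T)$ over $\Z$ (already recorded in Lemma~\ref{lem1500Y.160}). Under that irreducibility, the only $\Z$--irreducible factor $f(T)$ of $\Phi_n(T)$ is $\Phi_n(T)$ itself, so throughout $m=\phi(n)$. The task then reduces to determining how $\Phi_n(T)$ factors modulo a prime $p$, which I would do by analysing the orbit structure of the Frobenius endomorphism $\sigma:x\mapsto x^p$ on the group of primitive $n$th roots of unity inside $\overline{\F_p}$.

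First I would dispose of the unramified range $p\nmid n$. Since $\gcd(n,p)=1$, the derivative $nT^{n-1}$ is coprime to $T^n-1$, so $T^n-1$ is separable modulo $p$ and its roots form a cyclic group $\mu_n\subset\overline{\F_p}^{\times}$. The roots of $\Phi_n(T)$ are exactly the generators of $\mu_n$. Under the canonical identification $\mu_n\cong\Z/n\Z$, Frobenius acts as multiplication by $p$, hence its orbits on the primitive roots all have common length $\ord_n p$. Because the minimal polynomial of any root over $\F_p$ is the product of $T-\zeta$ over a single Frobenius orbit, $\Phi_n(T)$ factors modulo $p$ into $\phi(n)/\ord_n p$ irreducible pieces each of degree $\ord_n p$. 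Specialising gives (ii) and (iii) immediately: $p\equiv 1\bmod n$ is equivalent to $\ord_n p=1$, forcing $m$ distinct linear factors (complete splitting), whereas $\ord_n p=\phi(n)$ forces a single orbit of full length, so $\Phi_n(T)$ remains irreducible and one gets the inertia case.

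For the ramified case $p\mid n$, I would write $n=p^a k$ with $\gcd(p,k)=1$ and use the Frobenius identity $T^n-1=(T^k-1)^{p^a}$ in $\F_p[T]$. Stripping off the contributions coming from $\Phi_d$ for $d\mid k$, this collapses to the congruence
\begin{equation*}
\Phi_n(T)\equiv \Phi_k(T)^{\phi(p^a)}\pmod p,
\end{equation*}
so the reduction of $\Phi_n$ modulo $p$ is a proper power of a lower cyclotomic polynomial, exhibiting ramification. When $k\in\{1,2\}$ the factor $\Phi_k$ is linear and the displayed congruence degenerates to $(T-\alpha)^m$, which is exactly the form stated in (i); this is also the relevant situation for the small cyclotomic examples $\Phi_5,\Phi_8,\Phi_{12}$ appearing in the applications of the paper.

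The main obstacle is case (i) in full generality: the literal statement $f(T)=(T-\alpha)^m$ is really a clean shorthand and only holds verbatim when the tame part $k$ of $n$ satisfies $\phi(k)\leq 1$. The honest object is the power--of--lower--cyclotomic congruence above, after which one still has to invoke the unramified analysis applied to $\Phi_k$ to describe the factorisation of each repeated piece. The unramified Frobenius--orbit argument, by contrast, is entirely mechanical once the separability of $T^n-1$ and the identification $\mathrm{Gal}(\overline{\F_p}/\F_p)\cong\widehat{\Z}$ acting by $\sigma$ are in place.
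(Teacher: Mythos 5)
Your argument is correct and reaches the paper's conclusions by a more complete route. For parts (ii) and (iii) the paper simply quotes the decomposition $\Phi_n(T)=f_1(T)\cdots f_m(T)$ into $m=\phi(n)/d$ irreducible factors of degree $d=\ord_n p$ and reads off the two extreme cases $d=1$ and $d=\phi(n)$; you actually prove that decomposition, via separability of $T^n-1$ for $p\nmid n$ and the Frobenius-orbit count on the primitive $n$th roots of unity, supplying the justification the paper omits. The real divergence is in part (i): the paper argues only that $p\mid n$ forces $p\mid \disc\left(\Phi_n(T)\right)$ (citing Lemma \ref{lem1500Y.160}), which shows the reduction has a repeated factor but does not establish the asserted shape $(T-\alpha)^m$. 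Your congruence $\Phi_n(T)\equiv \Phi_k(T)^{\phi(p^a)}\pmod{p}$ for $n=p^ak$ with $p\nmid k$ is the correct general statement, and your caveat that the literal form $(T-\alpha)^m$ requires $\phi(k)\leq 1$ exposes a genuine overstatement in the theorem as written: for instance $\Phi_{12}(T)=T^4-T^2+1\equiv (T^2+T+1)^2 \bmod 2$ with $T^2+T+1$ irreducible over $\F_2$, even though $\Phi_{12}$ is one of the paper's own listed quartic examples. In short, your unramified analysis buys an actual proof where the paper has a citation-free assertion, and your ramified analysis buys a correct and sharper statement where the paper's discriminant argument proves strictly less than is claimed.
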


\begin{proof}[\textbf{Proof}] (i) If $p\mid n$, then it divides the discriminant $\disc\left(\Phi_n(T)\right)$, see \hyperlink{lem1500Y.160}{Lemma} \ref{lem1500Y.160}. This implies that the polynomial is ramified.\\
	
	(ii) Given a prime $p$, and an integer $n$, let $\ord_n p=d$ be the multiplicative order of $p$ modulo $n$. The polynomial 
	\begin{equation}\label{eq1500Y.175b}
		\Phi_n(T)=f_1(T)f_2(T)\cdots f_m(T)
	\end{equation} factors into $m=\phi(n)/d$ irreducible factors $f_i(T)\in \F[T]$ of degree $\deg f_i(T)=d$. Thus, it splits into linear factors if and only if the prime $p$ has multiplicative order $\ord_n p=1$. Equivalently, $p-1\equiv 0 \bmod n$.\\
	
	(iii) Given a prime $p$, and an integer $n$, let $\ord_n p=d$ be the multiplicative order of $p$ modulo $n$. The polynomial 
	\begin{equation}\label{eq1500Y.175c}
		\Phi_n(T)=f_1(T)f_2(T)\cdots f_m(T)
	\end{equation} factors into $m=\phi(n)/d$ irreducible factors $f_i(T)\in \F[T]$ of degree $\deg f_i(T)=d$. Thus, it remains irreducible if and only if the prime $p$ has multiplicative order $\ord_n p=\phi(n)$.
\end{proof}

\section{Problems}\label{EXE1500}

\subsection{Factorization and Reciprocity Problems}\label{EXE1505A}

\begin{exe}\label{exe1505A.010a} {\normalfont Use the quadratic reciprocity law to determine the factorization of the polynomial $f(T)=T^2+2$.\\
		
	}
\end{exe}

\begin{exe}\label{exe1505A.010b} {\normalfont Use the cubic reciprocity law to determine the factorization of the polynomial $f(T)=T^3+2$.\\
		
	}
\end{exe}

\begin{exe}\label{exe1505A.010c} {\normalfont Use the quartic reciprocity law to determine the factorization of the polynomial $f(T)=T^4+2$.\\
	}
\end{exe}

\begin{exe}\label{exe1505A.010k} {\normalfont Apply the Mobius inversion formula to show that the \textit{n}-cyclotomic polynomial has the factorization	
$$
	\Phi_n(T)=(x^n-1)\prod_{1<d\mid n}\Phi_d(T)^{-1}.
$$
}
\end{exe}

\subsection{Densities of Subsets of Squarefree Integers}\label{EXE1505B}
\begin{exe}\label{exe1505B.010a} {\normalfont Show that the value set $\mathscr{V}_f=\{n:f(n) \text{ is squarefree}\}\subset \N$ of linear polynomial $f(T)=aT+b$ has the minimal density $c_f=6/\pi^2p$ in the set of integers $\N=\{0,1,2,3, \ldots\}$ if and only if $a=1$.\\
		
	}
\end{exe}

\begin{exe}\label{exe1505B.010b} {\normalfont Show that the value set $\mathscr{V}_f=\{n:f(n) \text{ is squarefree}\}\subset \N$ of linear polynomial $f(T)=aT+b$ cannot has the maximal density $c_f=1$ in the set of integers $\N=\{0,1,2,3, \ldots\}$ for any integer $a\ne0$. Specifically, $\mathscr{V}_f\ne \N$ for any linear polynomial.\\
		
	}
\end{exe}

\begin{exe}\label{exe1505B.110m} {\normalfont Verify that the subset $\mathcal{Q}_o=\{n\geq1:\mu(n)\ne0 \text{ and } n\equiv 1 \bmod 2\}$ of odd squarefree integers has the density constant
		$\displaystyle  c_o= \prod_{p\geq3}\left( 1-\frac{1}{p^2}\right)=\frac{8}{\pi^2}$
		in the set of natural numbers $\N$.
	}
\end{exe}

\subsection{Multiplicative Orders}\label{EXE1505F}
\begin{exe}\label{exe1505B.010m} {\normalfont Let $u\ne \pm1,t^2$ be a primitive root modulo $p$, and let $\gcd(m,p-1)=1$. Show that the consecutive powers $1+u+u^2+\cdots+u^{m-1}=v$ is a primitive root modulo $p$. Hint: use the geometric series and $(1-v)/(1-u)$.
	}
\end{exe}

\subsection{Characteristic Functions}\label{EXE1590CF}

\begin{exe}\label{exe1590CF.010f} {\normalfont Let $v_p(n)=\max\{v:p^v\mid n\}$ be the $p$-adic valuation, and let $\mu^2(n)$ be the characteristic function of squarefree rational integers. Verify the followings statements. \\
		
(a) $\displaystyle  \kappa(p)=\frac{1}{p}\sum_{0\leq a<p}e^{i2 \pi an/p^2}=\begin{cases}
0&\text{ if }	v_p(n)\leq 1,\\
1&\text{ if }	v_p(n)\geq 2.\
		\end{cases}$\\
		
(b)	 $ \displaystyle\mu^2(n)=\prod_{p\mid n}\left( 1-\kappa(p)\right) $.
	}
\end{exe}

\subsection{Generating Functions of  Squarefree Integers}\label{EXE1570F}
\begin{exe}\label{exe1505D.150m} {\normalfont Verify that the subset of odd squarefree integers $\mathcal{Q}_o=\{n\geq1:\mu(n)\ne0 \text{ and } n\equiv 1 \bmod 2\}$ has the generating function  $$  \sum_{n\geq 1} \frac{\left( 1+(-1)^n\right) \mu^2(n)}{n^s}=c(s,2,1)\frac{\zeta(s)}{\zeta(2s)},$$
where $c(s,2,1)$ is a rational correction factor and $s\in \C$ is a complex number.
	}
\end{exe}

\begin{exe}\label{exe1505D.150p} {\normalfont Verify that the subset of squarefree integers $\mathcal{Q}(4,1)=\{n\geq1:\mu(n)\ne0 \text{ and } n\equiv 1 \bmod 4\}$  has the generating function  $$  \sum_{n\geq 1} \frac{\left( 1+\chi(n)\right) \mu^2(n)}{n^s}=c(s,4,1)\frac{\zeta(s)}{\zeta(2s)}$$
		where $\chi(n)=(n | 4)$ is the quadratic symbol, $c(s,4,1)$ is an irrational correction factor and $s\in \C$ is a complex number.
	}
\end{exe}

\begin{exe}\label{exe1505D.150r} {\normalfont Verify that the subset of squarefree integers  $\mathcal{Q}(4,1)=\{n\geq1:\mu(n)\ne0 \text{ and } n\equiv 3 \bmod 4\}$ has the generating function  $$  \sum_{n\geq 1} \frac{\left( 1-\chi(n)\right) \mu^2(n)}{n^s}=c(s,4,3)\frac{\zeta(s)}{\zeta(2s)}$$
		where $\chi(n)=(n | 4)$ is the quadratic symbol, $c(s,4,3)$ is an irrational correction factor and $s\in \C$ is a complex number.
	}
\end{exe}
\subsection{Numerical Computations}\label{EXE1550F}
\begin{exe}\label{exe1505B.150m} {\normalfont Determine the values of the following products to 100 decimal places accuracies.
\begin{multicols}{2}
\begin{enumerate}
\item[(a)] $\displaystyle  A= \prod_{\substack{p\equiv1 \bmod 8\\p=a^2+64b^2}}\left( 1-\frac{4}{p^2}\right)$	,
\item[(b)] $\displaystyle B=  \prod_{\substack{p\,\equiv\, 3 \bmod 8\\p=a^2+b^2}}\left( 1-\frac{2}{p^2}\right),$	
\end{enumerate}
\end{multicols}
and the density constant $c_f=AB$ associated with the squarefree values of the polynomial $f(t)=t^4+2$.
	}
\end{exe}

\subsection{Proofs for other Quartic Polynomials}\label{EXE1509A}

\begin{exe}\label{exe1509A.005a} {\normalfont Use \hyperlink{thm1212I.100-4A}{Lemma} \ref{lem1500R.810} to prove that the polynomial $f(T)=T^4+3$ has infinitely many squarefree values. More precisely, prove the asymptotic formula$$
		\sum_{x\leq n\leq 2x}\mu^2(n^4+3)=	c_fx+O( x^{1/2+\varepsilon}),$$
		where $c_f>0$ is a constant, and $\varepsilon>0$ is arbitrary.
	}
\end{exe}

\subsection{Smooth Values of Polynomials}\label{EXE2020S}
\begin{exe}\label{exe2020S.010m} {\normalfont  Let $f(t)=t^2+1$ and let $B>1$ be a small real number. Find an asymptotic formula for the the cardinality of the subset $\{n\leq x:p\mid f(n) \Rightarrow p\leq B\}$ smooth value of the polynomial as $x\to\infty.$
	}
\end{exe}

\subsection{Open Problems}\label{EXE1515}
\begin{exe}\label{exe1515.010a} {\normalfont Let $f(T)=T^2+1$. Since the integer values of a polynomial are not consecutive, any consecutive pattern of $k$-tuples are seems to be possible.  Does the pattern of $k$-consecutive constant values such as $$\left( \mu(n^2+1), \mu((n+1)^2+1), \mu((n+2)^2+1),\ldots,\mu((n+k-1)^2+1)\right)=(1,1,1,\ldots,1) $$ occur infinitely often as $n\to\infty$?\\
		
	}
\end{exe}

\begin{exe}\label{exe1515.010b} {\normalfont Let $f(T)=T^4+2$. Since the integer values of a polynomial are not consecutive, any consecutive pattern of 4-tuples are possible.  Does the pattern of $k$-consecutive constant values such as $$\left( \mu(n^4+2), \mu((n+1)^4+2), \mu((n+2)^4+2),\mu((n+3)^4+2)\right)=(1,1,1,1) $$ occur infinitely often as $n\to\infty$?
}
\end{exe}



\end{document}